
\documentclass[a4paper,11pt]{amsart}
\usepackage[utf8]{inputenc}
\usepackage{amsfonts, amsmath, amssymb}
\usepackage[left=3cm,right=3cm]{geometry}
\usepackage{amsthm}
\usepackage{graphicx}

\usepackage{url}
\usepackage{subfigure}

\newtheorem{tma}{Theorem}[section]

\newtheorem{lema}{Lemma}

\def\grad{\mathop{\rm grad}\nolimits}

\def\Hess{\mathop{\rm Hess}\nolimits}

\def\Ric{\mathop{\rm Ric}\nolimits}

\title{Gradient Ricci solitons on surfaces}

\author[Daniel Ramos]{Daniel Ramos \\ \\ \tiny{APRIL 23, 2013}}

\begin{document}

\begin{abstract}
We classify and expose all the gradient Ricci solitons on complete surfaces, open or closed, with curvature bounded below, and possibly with
a discrete set of cone-like
singular points that arise naturally. We give a precise qualitative description of each metric in terms of a phase portrait, that is the
most accurate description for all cases that do not admit an explicit expression in terms of elementary functions. Our classification
contains examples of smooth and conic solitons that were not described in the classic literature. We add some visual embeddings in $\mathbb
R^3$ for aesthetics.
\end{abstract}

\thanks{Universitat Autònoma de Barcelona, Departament de Matemàtiques - 80193 Bellaterra, Barcelona (Spain)-. E-mail:
\texttt{dramos@mat.uab.cat} . }

\maketitle

\section{Introduction}

Gradient Ricci solitons are special self-similar solutions to the Ricci flow. In \cite{Hamilton_surfaces}, R. Hamilton developed the Ricci
flow theory for surfaces (using there a normalized version of Ricci flow). Hamilton proved that all closed smooth surfaces with positive
curvature are convergent under the Ricci flow to a gradient Ricci soliton, and proved that the only solitons on a closed smooth surface are
those of constant curvature (\cite{Hamilton_surfaces}, Thm 10.1). B. Chow subsequently was able to remove the positive curvature hypothesis
in \cite{Chow_sphere}. In the same original work of Hamilton, it is described an open gradient soliton with nonconstant curvature known as
the steady cigar soliton. Also, in the course of the proof of Theorem 10.1 of \cite{Hamilton_surfaces} Hamilton found some solitons on the
topological sphere with
cone-like singularities, and Hamilton stated that ``the other solutions we have found exist on orbifolds''. The study of the Ricci flow
converging to these orbifold solitons was carried out by L.-F. Wu \cite{Wu}, Chow and Wu \cite{ChowWu} and Chow \cite{Chow_orbifolds}. This
brings to all
2-orbifolds a natural metric, including those orbifolds that do not admit a constant curvature metric (footballs and teardrops).

On the other hand, a very useful fact in dimension two is that all nonconstant curvature gradient solitons admit a nontrivial Killing
vector field and have a rotational symmetry (see \cite{3CY} pp. 241-242 and\cite{ChenLuTian}). This allows one to pick polar coordinates
and set a
single ODE for the soliton metric. The cigar soliton appears as an explicit solution for the steady case. See \cite{TRFTA_1} for
further reference.

In this paper we gather and re-order these results, and we apply a phase portrait analysis for the ODE associated to the soliton metric. We
obtain a complete classification of all two-dimensional gradient solitons on a surface. By ``surfaces'' we will mean open or closed
topological surfaces, endowed with a complete riemannian metric, smooth everywhere except possibly in a discrete set
of cone-like singular points, that will arise naturally. For geometric considerations, on the theorem statements we will discard all
riemannian metrics without a lower bound of the gaussian curvature, although we will find these examples in the course of the proofs. This
solves
some questions proposed in \cite{TRFTA_1}, p. 51, and describes some two dimensional solitons (smooth and conic) that didn't exist in the
classic literature. For the sake of completeness, we will include some known results with their proofs, so this exposition is
self-contained. The main theorem is:

\begin{tma}[Main theorem]
All gradient Ricci solitons on a surface, smooth everywhere except possibly on a discrete set of cone-like singularities, complete, and
with curvature bounded below fall into one of the following families:
\begin{enumerate}
 \item Steady solitons:
       \begin{enumerate}
        \item Flat surfaces.
        \item The smooth cigar soliton.
	\item The cone-cigar solitons of angle $\alpha\in(0,+\infty)$.
       \end{enumerate}
 \item Shrinking solitons:
       \begin{enumerate}
        \item Spherical surfaces.
	\item Teardrop and football solitons, on a sphere with one or two cone points.
	\item The shrinking flat gaussian soliton on the plane.
	\item The shrinking flat gaussian cones.
       \end{enumerate}
 \item Expanding solitons:
       \begin{enumerate}
        \item Hyperbolic surfaces.
	\item The $\alpha\beta$-cone solitons, with a cone point of angle $\beta>0$ and an end asymptotic to a cone of angle $\alpha>0$.
	\item The smooth blunt $\alpha$-cones.
	\item The smooth cusped $\alpha$-cones in the cylinder, asymptotic to a hyperbolic cusp in one end and asymptotic to a cone of angle
$\alpha>0$ in the other end.
	\item The flat-hyperbolic solitons on the plane, that are universal coverings of the cusped cones.
	\item The expanding flat gaussian soliton on the plane.
	\item The expanding flat gaussian cones.
       \end{enumerate}
\end{enumerate}
\end{tma}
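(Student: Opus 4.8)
The plan is to reduce the soliton equation to a single autonomous planar ODE and then read off the entire classification from its phase portrait. Writing the defining equation in dimension two as $\Ric + \Hess f = \lambda g$, i.e. $K g + \Hess f = \lambda g$ where $K$ is the Gaussian curvature, the first step is to dispose of the constant-curvature case: if $f$ is constant the equation forces $K \equiv \lambda$, which produces precisely the flat, spherical and hyperbolic backgrounds (families 1(a), 2(a), 3(a)). For every nonconstant-curvature soliton I may invoke the rotational symmetry recalled in the introduction (\cite{3CY}, \cite{ChenLuTian}) to choose geodesic polar coordinates $g = dr^2 + \phi(r)^2\, d\theta^2$ with $f = f(r)$, so that $K = -\phi''/\phi$.

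Next I would compute the two independent components of $K g + \Hess f = \lambda g$. Using $\Hess f = f''\, dr^2 + \phi\phi' f'\, d\theta^2$ they become
$$ f'' = \lambda - K, \qquad \frac{\phi'}{\phi}\, f' = \lambda - K. $$
Subtracting gives $f'' = (\phi'/\phi)\, f'$, whose first integral is $f' = c\,\phi$ for a constant $c$ (the case $c=0$ recovering the constant-curvature solutions). Substituting back and using $K = -\phi''/\phi$ turns the system into the single second-order equation $\phi'' - c\,\phi\,\phi' + \lambda\,\phi = 0$, equivalently the autonomous planar system
$$ \phi' = \psi, \qquad \psi' = c\,\phi\,\psi - \lambda\,\phi $$
on the half-plane $\phi > 0$. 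A scaling of the metric normalizes $\lambda$ to one of $-1, 0, +1$, reducing the analysis to essentially three phase portraits, one for each sign of $\lambda$.

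I would then study these portraits in turn. For each I locate the critical points and the invariant straight lines — the invariant horizontal line $\psi \equiv a$ (which is consistent only when $c = \lambda/a$) is exactly the flat Gaussian soliton/cone, with $\phi = a r$ and $f = \tfrac{1}{2}\lambda r^2$ — and I determine the asymptotics of every orbit. The geometric content enters as boundary conditions: completeness requires $r$ to fill a maximal interval on which $\phi > 0$ in the interior; an orbit reaching $\phi \to 0$ with $\phi' \to 1$ closes up smoothly, while $\phi' \to a \neq 1$ yields a cone-like singularity of total angle $2\pi a$; and the unbounded ends are classified (asymptotic cone, hyperbolic cusp, cylinder) from the rate at which $\phi$ grows. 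Finally, the hypothesis that $K$ be bounded below is used to discard the orbits along which $-\phi''/\phi \to -\infty$.

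Matching each admissible orbit, labelled by its pair of endpoint behaviours, to a named metric then produces the list: for $\lambda = 0$ the cigar and the cone-cigars alongside the flat surfaces; for $\lambda > 0$ the teardrops, footballs and the shrinking flat Gaussian soliton and cones; and for $\lambda < 0$ the richer family of $\alpha\beta$-cones, blunt and cusped $\alpha$-cones, flat-hyperbolic coverings and expanding Gaussian soliton and cones. I expect the genuine difficulty to lie not in the local ODE computation but in this last, global stage: carrying out the asymptotic analysis at each end rigorously, proving completeness, reading off the correct cone angles, and confirming that the phase portrait exhausts all possibilities, so that no soliton is missed and none is counted twice.
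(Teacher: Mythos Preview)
Your proposal is correct and follows essentially the same route as the paper: reduce via rotational symmetry (the Killing field $J(\grad f)$) to the warped metric $dr^2+\phi(r)^2\,d\theta^2$, derive the single ODE $\phi''-c\,\phi\,\phi'+\lambda\,\phi=0$ from the two components of the soliton equation, and then carry out a phase-portrait analysis for each sign of $\lambda$, interpreting endpoints as smooth centres, cone points, cylindrical/conical ends or cusps and discarding orbits with $K$ unbounded below. The paper's execution differs only in details you will want when filling in the global step: it exhibits an explicit first integral $H(v,w)=v^2-2w\mp\ln|2w\mp1|$ (after the normalization $v=c\phi$, $w=c\psi$) which makes the orbit structure and the matching of cone angles (e.g.\ the existence/uniqueness of footballs for any pair $\alpha_1<\alpha_2$) completely explicit, and it handles separately the case where the Killing flow has no fixed point and the surface is only a \emph{cover} of a rotationally symmetric one---this is precisely where the flat-hyperbolic planes (3(e)) enter, so be sure your symmetry reduction does not silently assume a centre of rotation.
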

Each family of solitons is described in the corresponding section. In section \ref{sec_basics} we recall briefly the definition of Ricci
solitons in dimension 2, and their properties of symmetry in the case of nonconstant curvature, that lead to a first-order ODE system. In
section \ref{sec_constcurv} we enumerate the closed solitons with
constant curvature. In sections \ref{sec_steady}, \ref{sec_shrink} and \ref{sec_expand} we study the ODE system in the steady, shrinking and
expanding cases, respectively. These three parts combined prove the Main Theorem. Finally, in appendix \ref{sec_gallery} we bring a gallery
of solitons embedded into $\mathbb R^3$, drawn with Maple.

During the final steps of writing this paper, we found that very recently J. Bernstein and T. Mettler have independently posted a paper
\cite{BernsteinMettler}, that exposes a very similar classification for the smooth two dimensional gradient Ricci solitons, using a
different method for the analysis of the ODEs.

\emph{Acknowledgements.} The author was partially supported by Feder/Mineco through the Grant MTM2009-0759. He is indebt to his advisor,
Joan Porti, for all his guidance. He also wishes to thank Gérard Besson for the invitation to visit Grenoble in fall 2011, and the
possibility of exposing parts of this work in the Autrans seminar in September 2011.

\section{Gradient Ricci solitons on surfaces and Rotational symmetry} \label{sec_basics}

In this section we recall the basics of Ricci solitons and their properties of symmetry. See \cite{TRFTA_1} for an extended introduction.

A \emph{Ricci flow} is a PDE evolution equation for a riemannian metric $g$ on a smooth manifold $\mathcal M$,
\begin{equation}
 \frac{\partial}{\partial t}g(t) = -2 \Ric_{g(t)} . \label{rf_eqn}
\end{equation}
 A \emph{Ricci soliton} is a special type of self-similar solution of the Ricci flow, in the form
 \begin{equation}
 \label{solit_defn} g(t)=c(t)\ \phi_t^* (g_0)
\end{equation}
where for each $t$, $c(t)$ is a constant and $\phi_t$ is a diffeomorphism. So, $g(0)=g_0$,
$c(0)=1$ and $\phi_0=id$. The family $\phi_t$ is the flow associated to a (maybe time-dependent) vector field $X(t)$; and
in the case when this vector field is the gradient field of a function, $X=\grad f$, the soliton is
said to be a \emph{gradient} soliton. In this case, differenciating the soliton equation (\ref{solit_defn}) and evaluating at $t=0$, we get
\begin{align*}
-2 \Ric_{g(0)} =  \frac{\partial}{\partial t} \Big|_{t=0} g(t) &= \dot c(0)\phi_0^*g_0 + c(0)\mathcal L_{\grad f} \phi_0^*(g_0)\\
&= \epsilon g_0 + 2\Hess_{g(0)} f
\end{align*}
where $\epsilon = \dot c(0)$. The soliton is said to be \emph{shrinking}, \emph{steady} or \emph{expanding} if the constant $\epsilon$ is
negative, zero or positive respectively. This constant can be normalized to be $-1$, $0$ or $+1$ respectively, being this equivalent to
reparameterize the time $t$. Therefore, a gradient Ricci flow is a triple $(\mathcal M, g, f)$ satisfying
\begin{equation}
 \Ric + \Hess f + \frac{\epsilon}{2} g =0 . \label{solit_eqn}
\end{equation}
In the 2-dimensional case we have $\Ric =\frac{R}{2}g$, hence the soliton equation becomes
\begin{equation}
 \Hess f + \frac{1}{2} (R+\epsilon)g=0 . \label{gr_shr_solit_2d}
\end{equation}

This equation only makes sense on a smooth riemannian surface. However, we will allow some cone-like singularities for the surface, namely
points such that admit a local coordinate chart in the form
$$ dr^2 + h(r,\theta)^2 \ d\theta^2$$
for some smooth $h:[0,\delta)\times \mathbb R /2\pi\mathbb Z \rightarrow \mathbb R$ such that $h(0,\theta)=0$ and $\frac{\partial
h}{\partial r} = \frac{\alpha}{2\pi}$ needs not to be $1$. The value $\alpha$ is the cone angle at this point. To make this point smooth, it
is required that $\alpha =2\pi$ and $\frac{\partial^{2k} h}{\partial r^{2k}} =0$ for all $k\in \mathbb N$ (see
\cite{TRFTA_1} p. 450).

We will use some properties of two-dimensional gradient solitons to turn the tensor equation \eqref{gr_shr_solit_2d} into a much
simpler first order vector ODE that will allow a subsequent qualitative analysis. The main property we will use is that there exists a
Killing vector field (given by a rotation of
$\grad f$) over the smooth part of $\mathcal M$. The associated line flow of
this field is a
one-parameter group acting globally by isometries, this group must be $\mathbb S^1$ and so $\mathcal M$ is
rotationally symmetric (this argument is from \cite{3CY} and \cite{ChenLuTian}). We define then a rotationally symmetric polar coordinate
chart on $\mathcal
M$, and the analysis of the local expression of the soliton
equation will give us the ODE system that satisfies the explicit metric over $\mathcal M$.

\medskip

Let $J: T\mathcal M \rightarrow T\mathcal M$ be an almost-complex structure on $\mathcal M$, that
is, a $90^{\circ}$ rotation on the positive orientation sense, so
$$ J^2=-Id \quad , \quad g(X,JX)=0 \quad \forall X\in \mathfrak X  \mathcal M .$$

\begin{lema} Some basic properties of $J$ are
 \begin{enumerate}
  \item $g(JY,Z)=-g(Y,JZ)$,
  \item $J$ anticommutes with $\flat$: $\flat (JX) = -J(\flat X)$,
  \item $J$ commutes with $\nabla$.
 \end{enumerate}
\end{lema}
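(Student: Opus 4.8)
\section*{Proof proposal}

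The plan is to treat the three statements in increasing order of difficulty, deriving the first two from the defining algebraic relations alone and isolating the parallelism of $J$ as the only substantial point. For (1), I would simply polarize the compatibility identity $g(X,JX)=0$: applying it to $X=Y+Z$ and expanding by bilinearity gives $g(Y,JZ)+g(Z,JY)=0$, since the diagonal terms $g(Y,JY)$ and $g(Z,JZ)$ vanish. Rewriting the second summand with the symmetry of $g$ yields $g(JY,Z)=-g(Y,JZ)$, which is exactly (1). Note this uses nothing beyond $g(X,JX)=0$; in particular no appeal to $J$ being an isometry is needed.

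For (2), I would first fix the convention that $J$ acts on a $1$-form $\alpha$ by the transpose rule $(J\alpha)(Y)=\alpha(JY)$. Then $\bigl(-J(\flat X)\bigr)(Y)=-(\flat X)(JY)=-g(X,JY)$, while $\bigl(\flat(JX)\bigr)(Y)=g(JX,Y)$, and these agree precisely by part (1). Thus (2) is a direct reformulation of (1) once the action of $J$ on covectors is made explicit.

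The main obstacle is (3), the statement that $\nabla J=0$, i.e. $\nabla_X(JY)=J(\nabla_X Y)$ for all $X,Y$. I would set $A_X:=\nabla_X J$ and show it vanishes by combining two constraints. First, differentiating $J^2=-\mathrm{Id}$ along $X$ gives $A_X J+J A_X=0$, so $A_X$ anticommutes with $J$. Second, differentiating the identity of part (1) and using metric compatibility of the Levi-Civita connection (the cross terms cancelling again by (1)) gives $g(A_XY,Z)+g(Y,A_XZ)=0$, so $A_X$ is skew-adjoint with respect to $g$.

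The finishing step is linear algebra in the two-dimensional fibre: every $g$-skew-adjoint endomorphism of an oriented Euclidean plane is a scalar multiple of $J$, say $A_X=\lambda(X)J$; but then anticommuting with $J$ forces $2\lambda(X)J^2=0$, hence $\lambda(X)=0$ and $A_X=0$. Alternatively, and perhaps more transparently, I would work in a local oriented orthonormal frame $\{e_1,e_2\}$ with $Je_1=e_2$, $Je_2=-e_1$, write the metric connection as $\nabla e_1=\omega\otimes e_2$, $\nabla e_2=-\omega\otimes e_1$ for the connection $1$-form $\omega$, and verify directly that $(\nabla_X J)e_i=0$. Either way, the essential point—and the only place where the dimension being two is used—is that skew-symmetry of the connection matrix in an orthonormal frame coincides with the matrix of $J$.
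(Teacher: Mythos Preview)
Your arguments for parts (1) and (2) are essentially identical to those in the paper: polarize $g(X,JX)=0$ with $X=Y+Z$ to get (1), and then read (2) as a restatement of (1) once $J$ on covectors is defined by $(J\alpha)(Y)=\alpha(JY)$.

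For part (3) your route is correct but genuinely different from the paper's. The paper does not introduce the operator $A_X=\nabla_X J$ at all; instead, for a nonzero vector field $X$ it uses $\{X,JX\}$ as a pointwise frame and checks directly that $\nabla(JX)$ and $J(\nabla X)$ have the same components in that frame. The $X$-components match by differentiating $g(X,JX)=0$ and applying (1); the $JX$-components match by differentiating $g(X,X)=g(JX,JX)$ and using that $J$ is a $g$-isometry. This is a short, hands-on verification. Your approach, by contrast, isolates the two structural facts $A_XJ+JA_X=0$ and $g(A_XY,Z)+g(Y,A_XZ)=0$, and then finishes with the observation that in dimension two the space of skew-adjoint endomorphisms is spanned by $J$, which cannot anticommute with itself unless it is zero. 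Your argument is slightly longer but more conceptual and makes the role of the dimension explicit; it is also the standard template for proving $\nabla J=0$ in K\"ahler geometry, so it generalizes more readily. Your alternative via the connection $1$-form in an orthonormal frame is likewise correct and is perhaps the most transparent of the three.
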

\begin{proof}  First property is elementary,
\begin{eqnarray*}
 0=g(Y+Z,J(Y+Z))&=&g(Y,JY)+g(Z,JY)+g(Y,JZ)+g(Z,JZ)\\&=&g(Z,JY)+g(Y,JZ).
\end{eqnarray*}
A warning about the notation: if $\flat(X)=\omega$ is a 1-form then $J: T^* \mathcal M \rightarrow
T^* \mathcal M$ is defined as $\omega \mapsto J\omega$ where $J\omega(W)=\omega(J(W))$. 

So, the second statement is just
$$\flat(JX)(W)=g(JX,W)=-g(X,JW)=-\flat(X)(JW)=-J(\flat X)(W) .$$

For the last statement, let $X\in \mathfrak X\mathcal M$, then $\{X, JX\}$ form a basis of
$T\mathcal M$. Let us see that $J(\nabla X)$ and $\nabla (JX)$ have the same projections over the
basis. Since $g(X,JX)=0$, taking covariant derivatives we get
$$g(\nabla X,JX) + g(X,\nabla (JX))=0$$
which implies
$$g(J(\nabla X),-X) + g(X,\nabla (JX))=0.$$
Again, differenciating $g(X,X)=g(JX,JX)$ we obtain
$$2g(X,\nabla X) = 2g(\nabla(JX),JX) ,$$
which implies
$$g(JX,J(\nabla X)) = g(JX, \nabla(JX)) .$$
\end{proof}

This construction shows that a two-dimensional gradient soliton admits a Killing vector field.
\begin{lema}[\cite{3CY} p. 241]
The vector field $J(\grad f)$ is a Killing vector field.
\end{lema}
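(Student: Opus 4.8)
The plan is to show directly that $\mathcal L_V g = 0$ for $V = J(\grad f)$, which is the definition of Killing. Recalling the general identity $(\mathcal L_V g)(X,Y) = g(\nabla_X V, Y) + g(\nabla_Y V, X)$, it suffices to prove that the endomorphism $X \mapsto \nabla_X V$ is antisymmetric with respect to $g$. So the first and main step is to compute $\nabla_X V$ explicitly, and here I would lean on the three properties of $J$ just established together with the soliton equation \eqref{gr_shr_solit_2d}.

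Concretely, since $J$ commutes with $\nabla$ (third property of the previous lemma), I would write
$$ \nabla_X V = \nabla_X (J\,\grad f) = J(\nabla_X \grad f). $$
Now $\nabla_X \grad f$ is exactly the Hessian of $f$ regarded as a $(1,1)$-tensor, characterized by $g(\nabla_X \grad f, Y) = \Hess f(X,Y)$. The key observation is that the soliton equation reads $\Hess f = -\tfrac12 (R+\epsilon)\, g$ as a bilinear form, so the associated endomorphism is a pointwise scalar multiple of the identity: $\nabla_X \grad f = -\tfrac12 (R+\epsilon)\, X$. Substituting gives the clean expression $\nabla_X V = -\tfrac12 (R+\epsilon)\, JX$.

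With this in hand, antisymmetry is immediate from the first property $g(JX,Y) = -g(X,JY)$ and the symmetry of $g$:
$$ g(\nabla_X V, Y) + g(\nabla_Y V, X) = -\tfrac12 (R+\epsilon)\big(g(JX,Y) + g(JY,X)\big) = 0, $$
since $g(JX,Y) + g(JY,X) = -g(X,JY) + g(X,JY) = 0$. This is precisely $(\mathcal L_V g)(X,Y) = 0$, so $V$ is Killing. I do not expect a genuine obstacle here; the calculation is short, and the only care needed is the bookkeeping between the Hessian as a $(0,2)$-tensor and as the endomorphism $\nabla \grad f$. What I would emphasize conceptually is that the soliton equation forces $\Hess f$ to be pointwise proportional to $g$ — a purely two-dimensional phenomenon coming from $\Ric = \tfrac{R}{2} g$ — and it is exactly this conformal character of the Hessian, combined with $J$ being a $g$-antisymmetric rotation, that produces the Killing field.
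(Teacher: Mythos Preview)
Your proof is correct and follows essentially the same approach as the paper's: both compute the Lie derivative via the symmetrized covariant derivative, commute $J$ past $\nabla$, invoke the soliton equation to see that the Hessian is a scalar multiple of $g$, and conclude from the antisymmetry $g(JX,Y)+g(JY,X)=0$. The only cosmetic difference is that the paper carries out the computation on the $1$-form side (applying $\flat$ and using $\flat J = -J\flat$), whereas you work directly with the vector field $\nabla_X(J\grad f)$; the two are dual versions of the same calculation.
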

\begin{proof}
Recall that a Killing vector field $W$ is such that its line flow is by isometries, or
equivalently, the metric tensor $g$ is invariant under the line flow, that can be expressed in
terms of the Lie derivative as $\mathcal L_W g =0$. Recall also that 
$$\mathcal L_W g \ (Y,Z)=\nabla \omega (Y,Z) + \nabla \omega (Z,Y)$$
where $\omega = \flat(W) = g(W,\cdot)$. Then
\begin{eqnarray*}
 \mathcal L_{J(\grad f)} g \ (Y,Z) &=& \nabla(\flat(J(\grad f)))(Y,Z) +
\nabla(\flat(J(\grad f)))(Z,Y)\\
                                   &=& \nabla J \flat \grad f (Y,Z) + \nabla J \flat \grad f (Z,Y)\\
                                   &=& \nabla J \nabla f (Y,Z) + \nabla J \nabla f (Z,Y)\\
                                   &=& J \nabla \nabla f (Y,Z) + J \nabla \nabla f (Z,Y)\\
                                   &=& \nabla^2 f (JY,Z) + \nabla^2 f (JZ,Y)\\
                                   &=& \frac{-1}{2}(R+\epsilon)g(JY,Z) + \frac{-1}{2}(R+\epsilon)g(JZ,Y)\\
                                   &=& -\frac{1}{2}(R+\epsilon)\Big( g(JY,Z) + g(JZ,Y)\Big) =0.
\end{eqnarray*}
Note that $\nabla f = df =\flat(\grad f)$, and again $J(\nabla f)$ is the 1-form $A\mapsto \nabla
f(JA)$ and $J(\nabla^2f)$ is the 2-covariant tensor field $(A,B)\mapsto \nabla^2f(JA,B)$.
\end{proof}

The Killing vector field may be null if the $\grad f$ field itself is null, otherwise, the surface admits a symmetry.

\begin{lema}
Let $(\mathcal M, g, f)$ be a gradient Ricci soliton on a surface. Then, at least one of the following holds:
\begin{enumerate}
 \item $\mathcal M$ has constant curvature.
 \item $\mathcal M$ is rotationally symmetric (i.e. admits a $\mathbb S^1$-action by isometries).
 \item $\mathcal M$ admits a quotient that is rotationally symmetric.
\end{enumerate}
Besides, if the surface has not constant curvature, no more than two cone points may exist.

\end{lema}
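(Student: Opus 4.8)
The plan is to split according to whether $f$ is constant. If $\grad f \equiv 0$ then $\Hess f \equiv 0$, and the soliton equation \eqref{gr_shr_solit_2d} forces $(R+\epsilon)g\equiv 0$, so $R\equiv -\epsilon$ is constant and we are in alternative (1). From now on assume $f$ nonconstant, so by the previous lemma $W:=J(\grad f)$ is a nontrivial Killing field. I would first record two elementary facts about $W$. Since $W(f)=df(J\grad f)=g(\grad f, J\grad f)=0$, the function $f$ is constant along the orbits of $W$; and because $W$ is Killing its flow preserves $f$, hence also $|\grad f|=|W|$. Since $\mathcal M$ is complete, $W$ is complete and generates a global one-parameter group $\psi_s$ of isometries. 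The zeros of a nontrivial Killing field on a surface are isolated (the zero set is totally geodesic of even codimension) and coincide with the critical points of $f$; away from them the regular level sets of $f$ are smooth $1$-manifolds to which $W$ is everywhere tangent and nonvanishing.

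The heart of the argument is a dichotomy on the shape of these regular level curves. If some connected regular level curve $C$ is a circle, then, $|W|$ being constant on $C$, the flow runs along $C$ at constant speed and is periodic with period $T=\mathrm{length}(C)/|W|_C$; consequently $\psi_T$ fixes $C$ pointwise. An orientation-preserving isometry (the whole flow lies in the identity component) fixing a point $p\in C$ and the tangent direction $T_pC$ has $d\psi_T|_p=\mathrm{id}$, and since an isometry of a connected manifold is determined by its $1$-jet at a point, $\psi_T=\mathrm{id}$ globally. Hence $\{\psi_s\}$ descends to an effective action of $\mathbb S^1=\mathbb R/T\mathbb Z$ by isometries and $\mathcal M$ is rotationally symmetric, which is alternative (2). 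If instead no regular level curve is a circle, every such curve is a copy of $\mathbb R$ and the flow acts by ``translations'' without period. Introducing the orthogonal web of the gradient lines of $f$ and the orbits of $W$, the metric on the regular part takes the warped form $d\rho^2+\varphi(\rho)^2\,dt^2$ with $t\in\mathbb R$ the flow parameter; the translation $t\mapsto t+T_0$ is then an isometry for every $T_0>0$, and quotienting by the free, properly discontinuous $\mathbb Z$-action it generates yields a surface with $t\in\mathbb R/T_0\mathbb Z$ on which $\{\psi_s\}$ induces an $\mathbb S^1$-symmetry. This is alternative (3).

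For the final assertion, I would argue that in the nonconstant case the cone points are exactly the fixed points of the rotation. Isometries permute the cone points and preserve their angles, so the discrete set of cone points is invariant under the $\mathbb S^1$-action (passing to the quotient in case (3)); a cone point not fixed by the action would carry a whole circle orbit of cone points, contradicting discreteness, so every cone point is a fixed point, i.e.\ a pole where the orbit circles collapse. In the coordinates $dr^2+h(r)^2\,d\theta^2$ the function $f$ depends only on $r$, and $W=f'(r)\,J\partial_r$ vanishes on an entire circle $\{r=r_0\}$ whenever $f'(r_0)=0$; since the zeros of $W$ are isolated this cannot occur at an interior (noncollapsed) radius, so $f$ is strictly monotone in $r$ and its only critical points are the poles at the endpoints of the radial interval. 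An interval has at most two endpoints, hence at most two poles, hence at most two cone points.

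I expect the main obstacle to be the global step in the middle paragraph: upgrading the locally defined Killing flow to a genuine global $\mathbb S^1$-action (respectively a rotationally symmetric quotient). This requires completeness of $W$, the rigidity of isometries under their $1$-jets, and some care at the cone points, where the flow must fix the singular point while rotating a punctured neighborhood of it.
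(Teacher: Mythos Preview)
Your argument is correct and reaches the same conclusion by a genuinely different decomposition. The paper splits the nonconstant case according to whether the Killing flow has a \emph{fixed point}: if $\grad f$ vanishes at some $O$, or if there is a cone point $P$ (which the flow cannot move to a smooth point), then $\phi_t$ fixes that point and acts linearly by isometries on the tangent plane or tangent cone there; a one-parameter group of linear isometries of the plane fixing the origin sits inside $SO(2)$, so the group is $\mathbb S^1$, and the exponential map globalizes this. You instead detect the circle action through a compact regular level curve of $f$: the Killing flow traverses it at constant speed, so has some period $T$ there, and then $1$-jet rigidity of isometries forces $\psi_T=\mathrm{id}$ globally. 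The two mechanisms are equivalent in effect---a fixed point forces nearby circular orbits, and conversely---but the paper's is slightly more elementary (no rigidity lemma needed), while yours treats smooth zeros and cone apices uniformly. For the bound on cone points the paper argues geometrically: all cone points are fixed by the $\mathbb S^1$-action, and if there were three, the minimal geodesics from one to the other two would be simultaneously fixed and exchanged by some rotation, a contradiction; your version via the polar model (fixed points are the poles $h=0$, and the $r$-interval has at most two ends) is equally valid. One small clean-up: your alternative (3) can carry no cone points at all, since a cone point is fixed by the flow and hence surrounded by circular orbits, which would already put you in alternative (2); so the parenthetical ``passing to the quotient in case (3)'' in your final paragraph is unnecessary.
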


\begin{proof}
We adapt the argument for the smooth closed case from \cite{ChenLuTian}. We will discuss in terms of $\grad f$. If $\grad f \equiv 0$, then
by the soliton equation \eqref{gr_shr_solit_2d} we have $R=-\epsilon$ and the
curvature is constant. Let us assume then that $f$ is not constant everywhere. Therefore $J(\grad f)$ is a nontrivial Killing vector
field and its line flow, $\phi_t$, is a one-parameter group
acting over $\mathcal M$ by isometries.

Suppose that $\grad f$ has at least one zero in a point $O\in \mathcal M$. This is the case of closed smooth surfaces. 
The point $O$ is a zero of the vector field $J(\grad f)$, so it is a fixed
point of $\phi_t$ for every $t$. Then, $\phi_t$ induces $\phi^*_t$ acting on $T_O\mathcal M$ by
isometries of the tangent plane, so we conclude that the group $\{\phi_t\}$ is $\mathbb S^1$ acting by
rotations on the tangent plane. Via the exponential map on $O$, the action is global on $\mathcal
M$ and therefore the surface is rotationally symmetric.

Suppose now that $\grad f$ has no zeroes but the surface contains a cone point $P$. Then the flowlines of $\phi_t$
cannot pass through $P$, because there is no local isometry between a cone point and a smooth one. So this point $P$ is
fixed by $\phi_t$ for every $t$ and, via the exponential map, $\phi_t$ induces $\phi^*_t$ acting
on $C_P\mathcal M$ the tangent cone (space of directions) on $P$. Again, a continuous one-parameter
subgroup of the metric cone $C_P\mathcal M$ must be the $\mathbb S^1$ group acting by rotations. Besides,
if other cone points were to exist, these should also be fixed by the already given $\mathbb S^1$ action.
This implies that no more than two cone points can exist on $\mathcal M$, for otherwise the minimal
geodesics joining $P$ with two or more conical points would be both fixed and exchanged by some
$\mathbb S^1$ group element. Note that in the case of two cone points, these need not to have equal cone
angles.

Finally suppose that $\grad f$ has no zeroes and the surface has no cone points. Then the surface is smooth and the flowlines of $\grad f$
are all of them isomorphic to $\mathbb R$ (no closed orbits can appear for the gradient of a function) and foliate the surface. The
action of $\phi_t$ exchanges the fibres of this foliation. The parameter of $\phi_t$ is $t\in \mathbb S^1$ or $t\in \mathbb R$. In the
first case, $\mathbb S^1$ is acting on $\mathcal M$ and it is rotationally symmetric. In the second case, $\mathcal M
\cong \mathbb R^2$, and the flowline $\phi_t$ of the Killing vector field induces a $\mathbb Z$-action by isometries by
$$ x \mapsto \phi_1(x)$$
that acts freely on $\mathcal M$ since no point is fixed by $\phi_t$ for any $t\neq 0$ (if $\phi_t(p)=p$, then all fibres are fixed and
every point in each fibre also is, so $\phi_t = id$). Then the quotient by this action is topologically ${\mathcal M} /_\sim
\cong \mathbb R \times \mathbb S^1 $ and is rotationally symmetric. We will find nontrivial examples of this solitons as cusped expanding
solitons and their universal coverings.
\end{proof}

The fact of being rotationally symmetric allows us to endow $\mathcal M$ with polar coordinates
$(r,\theta)$ such that the metric is given by
$$g=dr^2 + h^2(r) \ d\theta^2$$
where $r\in I \subseteq \mathbb R$ is the radial coordinate, and $\theta\in \mathbb R / 2\pi\mathbb Z$ is the periodic angular coordinate. The function $h(r)$ does not depend on $\theta$ because of the rotational symmetry; and similarly, the potential function only depends on the $r$ coordinate, since $\grad f$ is a radial vector field. Surfaces not rotationally symmetric but with a rotationally symmetric quotient also admit these
coordinates, changing only $\theta \in \mathbb R$.

\begin{lema}
Given the polar coordinates $(r,\theta ) \in \mathbb R \times \mathbb R / 2\pi\mathbb Z$ and the metric in the form $g=dr^2 + h^2(r) \
d\theta^2$, 
the gaussian curvature (which equals half the scalar curvature) is given by
$$K=\frac{R}{2}=\frac{-h''}{h} ,$$
and the hessian of a radial function $f(r)$ is given by
$$\Hess f = f''\ dr^2 + hh'f'\ d\theta^2 .$$
\end{lema}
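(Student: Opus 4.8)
The plan is to reduce everything to a direct computation in the coordinate frame $\{\partial_r,\partial_\theta\}$, since both quantities are tensorial and diagonal in these coordinates. First I would record the metric components $g_{rr}=1$, $g_{\theta\theta}=h^2$, $g_{r\theta}=0$, and observe that the only nonvanishing derivative of a metric coefficient is $\partial_r g_{\theta\theta}=2hh'$. Feeding this into the Koszul formula $\Gamma^k_{ij}=\tfrac12 g^{kl}(\partial_i g_{jl}+\partial_j g_{il}-\partial_l g_{ij})$ leaves only two independent nonzero Christoffel symbols, $\Gamma^r_{\theta\theta}=-hh'$ and $\Gamma^\theta_{r\theta}=\Gamma^\theta_{\theta r}=h'/h$; all the others vanish. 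This single computation drives both formulas in the statement.

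For the curvature I would compute the one independent component of the Riemann tensor, $R^r_{\phantom{r}\theta r\theta}=\partial_r\Gamma^r_{\theta\theta}-\Gamma^r_{\theta m}\Gamma^m_{r\theta}$, the remaining terms being zero by the list above. A short calculation gives $R^r_{\phantom{r}\theta r\theta}=-(h')^2-hh''+(h')^2=-hh''$, so lowering the index yields $R_{r\theta r\theta}=-hh''$. The Gaussian curvature is then $K=R_{r\theta r\theta}/(g_{rr}g_{\theta\theta}-g_{r\theta}^2)=-hh''/h^2=-h''/h$, and since in dimension two the scalar curvature satisfies $R=2K$, the stated identity $K=R/2=-h''/h$ follows. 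As an alternative I could bypass the Riemann tensor entirely and note that $h(r)$ measures the circumference of the orbit circles, hence solves the Jacobi equation $h''+Kh=0$ along the radial geodesics, which gives $K=-h''/h$ at once.

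For the Hessian of a radial function I would use $\Hess f(\partial_i,\partial_j)=\partial_i\partial_j f-\Gamma^k_{ij}\,\partial_k f$ together with $\partial_\theta f=0$ and $\partial_r f=f'$. The $rr$-entry is $f''$ since $\Gamma^k_{rr}=0$; the $r\theta$-entry vanishes because $\Gamma^r_{r\theta}=0$ and $\partial_\theta f=0$; and the $\theta\theta$-entry is $-\Gamma^r_{\theta\theta}f'=hh'f'$. Assembling these gives $\Hess f=f''\,dr^2+hh'f'\,d\theta^2$.

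I do not expect a genuine obstacle here: the computation is mechanical once the Christoffel symbols are in hand. The only points demanding care are the sign and index conventions in passing from $R^r_{\phantom{r}\theta r\theta}$ to the Gaussian curvature, and the implicit restriction to the smooth locus $h>0$, since $\Gamma^\theta_{r\theta}=h'/h$ is singular precisely at the cone tips where $h=0$; the formulas are therefore asserted on the regular part and extended by continuity where appropriate.
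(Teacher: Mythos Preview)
Your proposal is correct and follows essentially the same route as the paper: compute the Christoffel symbols (equivalently, the covariant derivatives $\nabla_{\partial_r}\partial_r=0$, $\nabla_{\partial_r}\partial_\theta=\tfrac{h'}{h}\partial_\theta$, $\nabla_{\partial_\theta}\partial_\theta=-hh'\partial_r$), then read off the single independent curvature component and the Hessian entries from the standard formulas. The paper's own proof is just a one-line sketch of exactly this computation, so your version is simply a more explicit rendering of it; the Jacobi-equation remark is a pleasant aside but not needed.
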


\begin{proof}
It is a standard computation. Covariant derivatives are given by
$$\nabla_{\partial_r} \partial_r = 0 \qquad \nabla_{\partial_r} \partial_\theta = \frac{h'}{h}
\partial_\theta \qquad \nabla_{\partial_\theta} \partial_\theta = -hh' \partial_r $$
Then we contract twice the curvature tensor $R(X,Y)Z = \nabla_X (\nabla_Y Z) - \nabla_Y (\nabla_X Z) - \nabla_{[X,Y]}Z$ for the scalar
curvature, and apply $\Hess f (X,Y)=X(Y(f))-(\nabla_X Y)(f)$ for the hessian.
\end{proof}

On that rotationally symmetric setting, the soliton equation becomes
$$\Hess f + \frac{1}{2} (R+\epsilon)g = \left( f''-\frac{h''}{h} +\frac{\epsilon}{2} \right) dr^2 + \left( hh'f' +\left(
-\frac{h''}{h} +\frac{\epsilon}{2}
\right) h^2 \right) d\theta^2 = 0 ,$$
which is equivalent to the second order ODEs system
\begin{equation}
 \left\{ \begin{array}{rcl}
	  f''-\frac{h''}{h} +\frac{\epsilon}{2} &=& 0 \\
	  \frac{h'}{h}f' -\frac{h''}{h} +\frac{\epsilon}{2} &=& 0.
         \end{array} 
\right.\end{equation}
We combine both equations to obtain
$$\frac{f''}{f'}=\frac{h'}{h} ,$$
and integrating this equation,
$$\ln f' = \ln h + C $$
so 
\begin{equation}
f' = ah \label{2dsolit_eq_potential}
\end{equation}
for some $a>0$. Hence, substituting on the system we obtain a single ODE,
\begin{equation}
 h'' -ahh' - \frac{\epsilon}{2}h=0. \label{2dsolit_eq}
\end{equation}

We summarize the computations in the following lemma,
\begin{lema}
Let $(\mathcal M, g,f)$ be a gradient Ricci soliton on a surface with nonconstant curvature. Then $\mathcal M$ admits coordinates
$(r,\theta)$, with
$r\in I \subseteq \mathbb R$ and $\theta\in \mathbb S^1$ or $\theta \in \mathbb R$, such that the metric takes the form $g=dr^2 + h^2(r) \
d\theta^2$ for some function $h=h(r)$ satisfying \eqref{2dsolit_eq}, and the potential is $f=f(r)$ satisfying
\eqref{2dsolit_eq_potential}.
\end{lema}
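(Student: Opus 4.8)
The plan is to assemble the three preceding lemmas together with the in-text computation into a single statement; almost all of the work has already been carried out, so the proof is essentially a matter of chaining the results in the correct order and attending to the few points where a division or a choice of sign must be justified.

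First I would invoke the rotational-symmetry lemma. Since $\mathcal M$ has nonconstant curvature, that lemma rules out the constant-curvature alternative and leaves only the two rotationally symmetric cases: either $\mathcal M$ itself carries an $\mathbb S^1$-action by isometries, or it admits a quotient that does. In either case $\mathcal M$ (or its quotient model) can be equipped with polar coordinates $(r,\theta)$ in which the metric reads $g = dr^2 + h^2(r)\,d\theta^2$, with $r\in I\subseteq\mathbb R$ and $\theta\in\mathbb S^1$ in the genuinely symmetric case or $\theta\in\mathbb R$ in the quotient case; because $\grad f$ is radial, both $f$ and $h$ depend on $r$ alone.

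Next I would feed this coordinate form into the curvature-and-Hessian lemma, which gives $R = -2h''/h$ and $\Hess f = f''\,dr^2 + hh'f'\,d\theta^2$. Substituting these into the two-dimensional soliton equation \eqref{gr_shr_solit_2d} and separating the $dr^2$ and $d\theta^2$ components yields the two scalar identities $f'' - \frac{h''}{h} + \frac{\epsilon}{2} = 0$ and $\frac{h'}{h}f' - \frac{h''}{h} + \frac{\epsilon}{2} = 0$. Subtracting one from the other eliminates the curvature term and produces $f''/f' = h'/h$, which integrates to $\ln f' = \ln h + C$ and hence to the relation \eqref{2dsolit_eq_potential}, $f' = a h$. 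Substituting $f' = ah$ (and $f'' = ah'$) back into the first scalar equation then collapses the system to the single second-order ODE \eqref{2dsolit_eq}, namely $h'' - ahh' - \frac{\epsilon}{2}h = 0$.

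The only steps that are not purely mechanical — and hence where I expect the real, though modest, care to lie — are the divisions and the sign of the constant. On the interior of $I$ one has $h>0$, so dividing by $h$ is legitimate, and away from the critical points of $f$ one has $f'\neq 0$, which licenses forming $f''/f'$; I would note that these differential identities then extend by continuity across the closure where $h$ or $f'$ may vanish. Finally, the integration constant $a = e^{C}$ is automatically positive, and after orienting $r$ so that $f$ increases this matches the stated hypothesis $a>0$ in \eqref{2dsolit_eq_potential}. With these points checked, the three lemmas and the computation combine to give exactly the asserted normal form.
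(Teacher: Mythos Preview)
Your proposal is correct and follows exactly the paper's own route: the lemma is explicitly presented there as a summary of the preceding rotational-symmetry lemma, the curvature/Hessian computation, and the in-text derivation of \eqref{2dsolit_eq_potential} and \eqref{2dsolit_eq}. Your added remarks on the legitimacy of dividing by $h$ and $f'$ and on fixing the sign of $a$ are points the paper leaves implicit, but they do not change the approach.
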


Setting $h'=u$, the second order ODE \eqref{2dsolit_eq} becomes a vector first order ODE
\begin{equation}
 \left\{ \begin{array}{rcl}
   h' &=& u \\
   u' &=& (au +\frac{\epsilon}{2})h .
  \end{array} \right. \label{2dsolit_sys}
\end{equation}
The solutions to system \eqref{2dsolit_sys} (and equation \eqref{2dsolit_eq}) are functions $h(r)$ that define rotationally symmetric
metrics on the cylinder $(r,\theta)\in \mathbb R \times \mathbb S^1$. This cylinder may be pinched in one or both ends, thus changing the
topology of the surface. The pinching appears as zeros of $h$. Closedness condition of the surface is equivalent to the boundary conditions
$$h(0)=0 \quad \mbox{and} \quad h(A)=0$$
for some $A>0$ such that $h(A)=0$. In this case, one or two cone angles may appear,
$$h'(0)=\frac{\alpha_1}{2\pi} \quad \mbox{and} \quad h'(A)=-\frac{\alpha_2}{2\pi}$$
where $\alpha_1$ and $\alpha_2$ are the cone angles. Smoothness conditions would be $h'(0)=1$ and $h'(A)=-1$. We shall study the system
\eqref{2dsolit_sys} for steady, shrinking and expanding solitons to obtain a complete enumeration of gradient Ricci solitons on surfaces of
nonconstant curvature.

\medskip
Incidentally, it is interesting to note some
geometric interpretations of the functions $f$ and $h$.

\begin{lema}
Let $(\mathcal M, g, f)$ be a rotationally symmetric gradient Ricci soliton in dimension 2, then
\begin{align*}
 \grad f &= f'(r) \partial_r = a h(r) \partial_r ,\\
 J(\grad f) &= \frac{f'(r)}{h(r)} \partial_\theta = a \partial_\theta ,\\
 K &= -\frac{h''}{h}=-\left( ah' +\frac{\epsilon}{2} \right) .
 \end{align*}
If $p\in \mathcal M$ is a (smooth or conic) center of rotation. Then, using the distance to $p$ as the $r$-coordinate,
\begin{align*}
  h(r) &= \frac{1}{2\pi}\mathrm{Perimeter}(\mathrm{Disc}(r)) ,\\
  f(r) &= \frac{a}{2\pi}\mathrm{Area}(\mathrm{Disc}(r)) + f(0) ,\\
\end{align*}
where $\mathrm{Disc}(r)$ is the disc with radius $r$ centered at $p$.
On the other hand, if there is no center of rotation, then
\begin{align*}
 h(r_0) &= \frac{1}{2\pi}\mathrm{Length}(L) ,\\
 f(r_1))-f(r_0) &= \frac{a}{2\pi}\mathrm{Area}(B) ,\\
 \end{align*}
where $L$ is the level set $\{r=r_0\}$ and $B$ is the annulus bounded by the two level sets $\{r=r_0\}$ and $\{r=r_1\}$.
\end{lema}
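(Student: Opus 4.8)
The plan is to verify each identity by a direct computation in the polar frame $\{\partial_r,\partial_\theta\}$, so no machinery is needed beyond the formulas already assembled. I would begin with the three pointwise identities. Since $f=f(r)$ depends only on the radial coordinate and the metric is $g=dr^2+h^2\,d\theta^2$, the inverse metric has $g^{rr}=1$, $g^{\theta\theta}=h^{-2}$, whence $\grad f = g^{rr}f'\,\partial_r = f'\,\partial_r$; substituting the relation $f'=ah$ from \eqref{2dsolit_eq_potential} gives $\grad f = ah\,\partial_r$. For the second identity I would use the orthonormal frame $e_1=\partial_r$, $e_2=h^{-1}\partial_\theta$, so that the positively oriented rotation sends $Je_1=e_2$; then $J(\grad f)=f'\,Je_1=f'h^{-1}\partial_\theta=\frac{f'}{h}\partial_\theta=a\,\partial_\theta$. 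The curvature identity $K=-h''/h$ is exactly the formula proved in the earlier curvature lemma, and rewriting $h''$ through the ODE \eqref{2dsolit_eq} as $h''=ahh'+\frac{\epsilon}{2}h$ yields $K=-\left(ah'+\frac{\epsilon}{2}\right)$.

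For the geometric interpretations I would first observe that, because $\nabla_{\partial_r}\partial_r=0$ and $g(\partial_r,\partial_r)=1$, the radial curves $\theta=\mathrm{const}$ are unit-speed geodesics issuing from the center $p$; hence choosing $r$ to be the Riemannian distance to $p$ is consistent with the metric form, and each level set $\{r=\mathrm{const}\}$ is precisely the geodesic circle of that radius. The Riemannian area element in these coordinates is $\sqrt{\det g}\,dr\,d\theta=h\,dr\,d\theta$. Integrating along a circle of radius $r$ gives $\mathrm{Perimeter}(\mathrm{Disc}(r))=\int_0^{2\pi}h(r)\,d\theta=2\pi h(r)$, which is the first formula. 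For the second, the area of the disc is $\mathrm{Area}(\mathrm{Disc}(r))=\int_0^r\!\!\int_0^{2\pi}h(\rho)\,d\theta\,d\rho=2\pi\int_0^r h(\rho)\,d\rho$, while $f(r)-f(0)=\int_0^r f'(\rho)\,d\rho=a\int_0^r h(\rho)\,d\rho$; comparing the two integrals yields $f(r)-f(0)=\frac{a}{2\pi}\mathrm{Area}(\mathrm{Disc}(r))$.

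The case without a center of rotation is identical in spirit: the level set $L=\{r=r_0\}$ is again a circle of length $\int_0^{2\pi}h(r_0)\,d\theta=2\pi h(r_0)$, and the annulus $B$ bounded by $\{r=r_0\}$ and $\{r=r_1\}$ has area $2\pi\int_{r_0}^{r_1}h\,dr$, so that $f(r_1)-f(r_0)=a\int_{r_0}^{r_1}h\,dr=\frac{a}{2\pi}\mathrm{Area}(B)$. None of these steps is genuinely delicate; the only point that requires care rather than pure computation is the identification of $r$ with the distance function and of the coordinate circles with geodesic circles, which rests on the radial-geodesic property $\nabla_{\partial_r}\partial_r=0$ and on the polar chart being valid up to the cut locus of $p$.
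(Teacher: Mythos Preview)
Your proof is correct. The paper actually states this lemma without proof, treating it as an elementary observation (it is introduced with ``Incidentally, it is interesting to note some geometric interpretations of the functions $f$ and $h$''), so there is no argument in the paper to compare against. Your direct computations in the polar frame are exactly the natural verification the author presumably had in mind, and every step is sound; the only remark I would add is that the caveat about the cut locus is not really needed here, since in this rotationally symmetric setting the radial coordinate is global on the punctured surface by construction.
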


\section{Closed solitons of constant curvature} \label{sec_constcurv}

Before looking for the specific nontrivial steady, shrinking and expandig solitons, in this section we rule out the constant curvature solitons that also are
rotationally symmetric. Following \cite{ChenLuTian}, if we look for rotationally symmetric closed smooth solitons ($h(0)=h(A)=0$ and
$h'(0)=-h'(A)=1$), we can show that
there is no other
function $f$ than a
constant one. For we
multiply equation \eqref{2dsolit_eq} by $h'$ to get 
$$h'h''-ah(h')^2+\frac{hh'}{2}=0$$
and integrate on $[0,A]$ to obtain
$$\frac{(h')^2}{2}\Bigg|_0^A -a \int_0^A h(h')^2 dr + \frac{h^2}{4}\Bigg|_0^A = 0 ,$$
and since $h(0)=h(A)=0$, and $h'(0)=-h'(A)$,
$$0=-a \int_0^A h(h')^2 dr$$
which is impossible unless $a=0$. This is indeed the case when $f'=0$, there is no gradient vector field,
no Killing vector field, constant curvature and the soliton is a homothetic fixed metric. Note that if there is no vector field, there is
no need to be rotationally symmetric, thus one can have constant curvature surfaces of any genus.
Therefore we have seen the following lemma,

\begin{lema}
The only solitons over a compact smooth surface are those of constant curvature.
\end{lema}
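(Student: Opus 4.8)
The plan is to reduce the closed smooth case to the rotationally symmetric ODE and then run the integration-by-parts argument already foreshadowed in the computation preceding the statement. First I would dispose of the trivial alternative: if $\grad f \equiv 0$ then the soliton equation \eqref{gr_shr_solit_2d} immediately gives $R=-\epsilon$, so the curvature is constant and there is nothing to prove (and, as the surrounding text notes, no rotational symmetry is needed, so constant-curvature metrics of arbitrary genus are admitted). I may therefore assume $f$ is nonconstant. Since $\mathcal M$ is closed, $f$ attains a maximum and a minimum, so $\grad f$ vanishes at these critical points; by the rotational-symmetry lemma such a zero is a center of rotation, and $\mathcal M$ becomes rotationally symmetric with polar coordinates $(r,\theta)$ and metric $g=dr^2+h^2(r)\,d\theta^2$, where $h$ solves \eqref{2dsolit_eq} and $f'=ah$ by \eqref{2dsolit_eq_potential}.

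Next I would record the boundary data forced by the topology and smoothness. Because $\mathcal M$ is closed, $r$ ranges over a finite interval $[0,A]$ and the geodesic circles degenerate at both ends, so $h(0)=h(A)=0$, while $h>0$ on $(0,A)$ since it is a circle-radius function. Smoothness at the two poles forces $h'(0)=1$ and $h'(A)=-1$; in particular $h'(0)=-h'(A)$. The core step is then the energy identity: multiplying the ODE \eqref{2dsolit_eq} by $h'$ and integrating over $[0,A]$ yields
$$ \frac{(h')^2}{2}\Big|_0^A - a\int_0^A h\,(h')^2\,dr - \frac{\epsilon}{4}\,h^2\Big|_0^A = 0. $$
The point I would stress is that every boundary contribution dies: the $h^2$ term vanishes because $h(0)=h(A)=0$ (so the value of $\epsilon$ is irrelevant, and the argument is uniform across the steady, shrinking and expanding cases), and the $(h')^2$ term vanishes because $h'(0)=-h'(A)$ makes the two endpoint squares equal. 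What survives is $a\int_0^A h\,(h')^2\,dr=0$; since $h\ge 0$ with $h>0$ on $(0,A)$, the integrand is nonnegative and not identically zero, so the integral is strictly positive and hence $a=0$.

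Finally, $a=0$ gives $f'=ah=0$ by \eqref{2dsolit_eq_potential}, so $f$ is constant, contradicting the standing assumption; equivalently \eqref{gr_shr_solit_2d} again forces $R=-\epsilon$ to be constant. Thus every closed smooth soliton has constant curvature. I expect no serious obstacle in the calculation itself, which is a clean integration by parts. The only step demanding genuine care is the reduction at the start: I must verify that a \emph{closed} smooth $\mathcal M$ always lands in the ``$\grad f$ has a zero'' branch of the rotational-symmetry lemma, rather than the cone-point or $\mathbb R^2$-quotient branches. This is exactly what the existence of a critical point of the continuous function $f$ on the compact surface guarantees, so the branch analysis closes without incident.
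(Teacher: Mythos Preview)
Your proposal is correct and follows essentially the same route as the paper: multiply \eqref{2dsolit_eq} by $h'$, integrate over $[0,A]$, use $h(0)=h(A)=0$ and $h'(0)=-h'(A)$ to kill the boundary terms, and conclude $a=0$. Your additions --- making explicit that compactness forces a zero of $\grad f$ and hence lands you in the rotationally symmetric branch, and noting that the $h^2$ boundary term vanishes regardless of $\epsilon$ --- are elaborations rather than departures; the paper simply takes these points for granted from Section~\ref{sec_basics}.
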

More generally, rotationally symmetric closed solitons with two equal angles satisfy $h'(0)=-h'(A)=\frac{\alpha}{2\pi}$ and the same
argument applies. In this case, equation \eqref{2dsolit_eq} turns into
$$h''-\frac{\epsilon}{2}h=0$$
that can be explicitly solved. For $\epsilon=1$ the solution is
$$h(r)=c_1 e^{r/\sqrt{2}} + c_2 e^{-r/\sqrt{2}}$$
but the closedness condition $h(0)=h(A)=0$ implies $c_1=c_2=0$. Thus there are no expanding solitons with two equal cone points besides the constant curvature ones.

For $\epsilon=0$, the solution is $h(r)=c_1 r+c_2$, that can't have two zeroes unless $h\equiv 0$. Finally, for $\epsilon=-1$ the solution
is $h(r)=c_1 \sin(r/\sqrt{2}) + c_2 \cos(r/\sqrt{2})$, and by the closedness $c_2=0$. This metric is locally the round sphere.
We have therefore seen the following,

\begin{lema}
The only solitons over a compact surface with two equal cone points are shrinking spherical surfaces.
\end{lema}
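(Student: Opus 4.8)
The plan is to reduce everything to the linear ODE obtained when the soliton constant $a$ vanishes, and then to read off the geometry directly. First I would invoke the integral identity established just above: multiplying \eqref{2dsolit_eq} by $h'$ and integrating over $[0,A]$, the boundary contributions $\frac{(h')^2}{2}+\frac{h^2}{4}$ cancel by virtue of $h(0)=h(A)=0$ together with the \emph{equal}-angle hypothesis $h'(0)=-h'(A)$, leaving $a\int_0^A h(h')^2\,dr=0$. Since $h(h')^2\ge 0$ and $h$ is not identically zero, this forces $a=0$. This is the crucial step, because it decouples the nonlinearity: with $a=0$ equation \eqref{2dsolit_eq} collapses to the constant-coefficient linear ODE $h''-\frac{\epsilon}{2}h=0$.

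Next I would solve this linear equation in each of the three regimes and impose the closedness conditions $h(0)=h(A)=0$. For $\epsilon=+1$ (expanding) the general solution is a combination of $e^{\pm r/\sqrt2}$, which is strictly convex and cannot vanish at two distinct points unless it is identically zero; for $\epsilon=0$ (steady) the solution is affine, $h(r)=c_1 r+c_2$, again with at most one zero. Hence neither the expanding nor the steady case admits a nontrivial closed surface, and only the shrinking case $\epsilon=-1$ survives, where $h(r)=c_1\sin(r/\sqrt2)+c_2\cos(r/\sqrt2)$. Imposing $h(0)=0$ kills the cosine term, giving $h(r)=c_1\sin(r/\sqrt2)$, and $h(A)=0$ is then met at $A=\pi\sqrt2$.

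Finally I would identify the geometry. Because $a=0$, relation \eqref{2dsolit_eq_potential} gives $f'=ah=0$, so $f$ is constant and the gradient field is trivial; moreover the curvature formula $K=-\left(ah'+\frac{\epsilon}{2}\right)$ collapses to $K=-\frac{\epsilon}{2}$, a constant. For the surviving case $\epsilon=-1$ this is $K=+\tfrac12>0$, a spherical metric. When both cone angles equal $2\pi$ this is the round sphere; for a common angle $\alpha\neq 2\pi$ the profile $h(r)=c_1\sin(r/\sqrt2)$ automatically satisfies $h'(0)=-h'(A)$ at the two poles, so the surface closes up into a symmetric football (spindle), still of constant positive curvature.

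I do not expect a genuine analytic obstacle here, since the heart of the matter is a one-line integration followed by a linear ODE case analysis. The point that most deserves care is the bookkeeping of the $a=0$ degeneration: one must record that vanishing $a$ means there is no honest nonconstant potential, so these objects are trivial (homothetic) solitons of constant curvature rather than solitons with nontrivial $\grad f$, and one should confirm that the closedness and equal-angle conditions are simultaneously consistent only in the shrinking regime — which is precisely what pins the classification down to the spherical footballs.
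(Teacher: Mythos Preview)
Your proposal is correct and follows essentially the same route as the paper: the integral identity forces $a=0$ under the equal-angle hypothesis, and the resulting linear ODE $h''-\tfrac{\epsilon}{2}h=0$ is solved case by case, with only $\epsilon=-1$ surviving and yielding the spherical metric. Your additional remarks on the curvature formula $K=-\tfrac{\epsilon}{2}$ and the value $A=\pi\sqrt2$ go slightly beyond what the paper writes out but are consistent with it; one small quibble is that a general combination of $e^{\pm r/\sqrt2}$ need not be globally strictly convex, though on the relevant interval where $h>0$ one has $h''=\tfrac12 h>0$, so your conclusion that no two zeros can occur stands.
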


Up to now, we have examinated all possible cases with $a=0$ (equivalently, with $f$ constant and with constant curvature). Thus, we will assume henceforth that $a\neq 0$ and $f$ is not constant.

\section{Steady solitons} \label{sec_steady}

In this section we study the steady case ($\epsilon=0$) of rotationally symmetric solitons. The equation \eqref{2dsolit_eq} reduces to 
\begin{equation} h''-ahh'=0 \end{equation}
and the system \eqref{2dsolit_sys} to
\begin{equation}
 \left\{ \begin{array}{rcl}
   h' &=& u \\
   u' &=& auh
  \end{array} \right. \label{2dsolit_steady_sys}
\end{equation}
The phase portrait of \eqref{2dsolit_steady_sys} is shown in Figure~\ref{2dsolit_steady_pp}

\begin{figure}[ht]
 \centering
\includegraphics[width=0.6\textwidth]{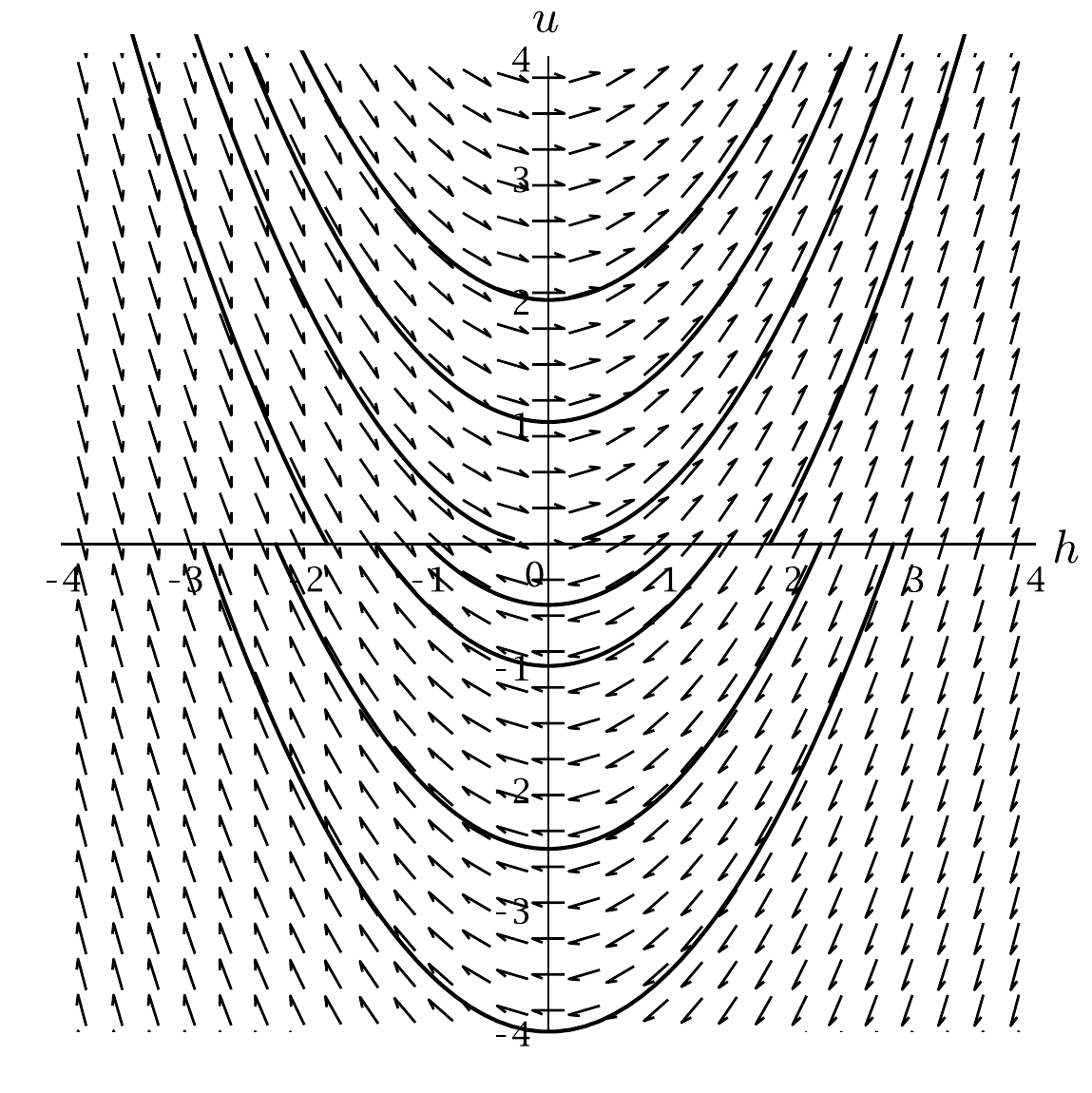}
\caption{Phase portrait of the system \eqref{2dsolit_steady_sys} with $a=1$.} \label{2dsolit_steady_pp}
\end{figure}

This phase portrait has a line of fixed points at $\{u=0\}$, that account for the trivial steady solitons consisting on a flat cylinder of
any fixed diameter (or their universal covering, the flat plane). No other critical points are present. Only the right half-plane $\{h>0\}$
is needed, since we can take $h>0$ in the
metric definition.

Every integral curve of the system lies on a parabola. This follows from manipulating system \eqref{2dsolit_steady_sys}
$$u'=ahh'=a\left(\frac{h^2}{2}\right)'$$
and hence
$$u=a\frac{h^2}{2} +C .$$
In another terminology, the function
$$H(h,u)=a\frac{h^2}{2}-u$$
is a first integral of the system \eqref{2dsolit_steady_sys}.
Furthermore, we can finish the integration of the equation
$$h'=a\frac{h^2}{2}+C$$
by writting
$$ \frac{h'}{C+\left(\sqrt{\frac{a}{2}}h\right)^2}=1 .$$
The solution to this ODE is
\begin{equation}
 h(r)=\sqrt{\frac{2C}{a}} \tan \left( \sqrt{\frac{2}{aC}} r + D \right) \label{explodinglike}
\end{equation}
if $C>0$;
\begin{equation}
 h(r)=\sqrt{\frac{-2C}{a}} \tanh \left( \sqrt{\frac{2}{-aC}} r + D \right) \label{cigarlike}
\end{equation}
if $C<0$; and
\begin{equation}
 h(r)=\frac{1}{D-\frac{a}{2}r} \label{cigexplike}
\end{equation}
if $C=0$.

Now, let us examinate each type of solution. If $C>0$, the parabola lies completely on the upper half-plane $\{u>0\}$. The equation
\eqref{explodinglike} implies that $h \rightarrow \infty$ for some finite value of $r$, and hence the metric is not complete. Furthermore,
the gaussian curvature of the metric $K$ satisfies
$$u'=-Kh$$
and since $u$ is increasing on these solutions, the curvature is not bounded below. The case $C=1$ is sometimes called
the \emph{exploding soliton} in the literature (\cite{TRFTA_1}).

If we look at $C=0$, the parabola touches the origin of coordinates, and its right hand branch defines a metric on the cylinder. The value
of $D=\frac{1}{h(0)}$ can be set so that $D=h(0)=1$ just reparameterizing $r$. With this parameterization, $r\in(-\infty, \frac{2}{a}$. For $r\leq 0$, the function $h$ is well defined and
determines a negatively curved metric that approaches a cusp as $r\rightarrow -\infty$. However, for $r\in [0, \frac{2}{a})$ the metric is
not complete and its curvature tends to $-\infty$ as $t\rightarrow \frac{2}{a}$.

We look now at the case $C<0$, first for the solutions lying in the lower half-plane $\{u<0\}$. We can assume $h(0)=0$, $C=u(0)$, $D=0$ and
$r<0$ (this means that $-r$ is the arc-parameter of the meridians). All these arcs of parabolas join a point on the $\{h=0\}$ axis with a
point on the $\{h'=u=0\}$ axis. This means that the cylinder is pinched in one end, and approaches a constant diameter cylinder on the other
end. The metrics are complete on the cylindrical end, because from equation \eqref{cigarlike} $h\rightarrow cst$ as $r\rightarrow -\infty$.
The curvature on these metrics is bounded and positive, since $u$ and $u'<0$ are bounded. 

Some of these metrics are smooth, the particular cases of $C=u(0)=h'(0)=-1$. Note that derivating the equation $h''=ahh'$ and evaluating
at $r=0$ one sees that all even-order derivatives vanish and the surface is truly $\mathcal C^\infty$ at this point. These are the so called
\emph{cigar solitons}. There are
actually infinitely many of them, adjusting the value of $a$ and changing the diameter of the asymptotic cylinder, although all of them are
homothetic and hence it is said to exist \emph{the} cigar soliton. All the other metrics
have a cone point at $r=0$, whose angle is $-2\pi h'(0)$.

The only remaining case to inspect is the solutions with $C<0$ lying on the upper half plane $\{u>0\}$. These unbounded arcs of parabolas
rise from the axis $\{u=0\}$. We can assume (changing $D$ and reparameterizing $r$) that $r\in [0,+\infty)$. The metric is complete in
$r\rightarrow +\infty$ because of equation \eqref{cigarlike}, however, these metrics fail to be complete on $r=0$, having a metric completion with boundary $\mathbb S^1$. The curvature on these metrics is negative and not bounded below.

We summarize the discussion in the following theorem,

\begin{tma}
The only complete steady gradient Ricci solitons on a surface with curvature bounded below are:
\begin{enumerate}
 \item Flat surfaces (possibly with cone points).
 \item The smooth cigar soliton (up to homothety).
 \item The cone-cigar solitons of angle $\alpha \in (0,+\infty)$ (up to homothety).
\end{enumerate}

\end{tma}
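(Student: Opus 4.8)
The plan is to read the classification directly off the phase portrait of \eqref{2dsolit_steady_sys}, exploiting the two structural facts already in hand: every orbit lies on a parabola $u=\tfrac{a}{2}h^2+C$ (a level set of the first integral $H(h,u)=\tfrac{a}{2}h^2-u$), and along any orbit the Gaussian curvature is $K=-au$, which follows from combining $u'=-Kh$ with $u'=auh$. This second identity is the decisive simplification: since $a>0$, demanding that $K$ be bounded below is \emph{exactly} demanding that $u$ stay bounded above. So I would first peel off the flat solutions — the line of fixed points $\{u=0\}$ gives $h\equiv\mathrm{const}$, hence $K\equiv0$, producing the flat cylinders and their universal cover $\mathbb R^2$, while the trivial metrics with $a=0$ account for the remaining flat surfaces with arbitrary cone points treated in Section~\ref{sec_constcurv} — and then show that among the non-flat orbits only one family is simultaneously complete and curvature-bounded-below.

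For the non-flat orbits I would organize the argument by the sign of the vertex parameter $C$. If $C>0$ the whole parabola sits in $\{u>0\}$ and the explicit solution \eqref{explodinglike} drives $h\to+\infty$ in finite radial time; the same blow-up occurs for the $C=0$ orbit \eqref{cigexplike} and for the $\{u>0\}$ branch of every $C<0$ parabola. In each of these cases $u\to+\infty$, so $K=-au\to-\infty$ is not bounded below (and the metric is moreover incomplete at that end), and they are discarded. This leaves precisely the $\{u<0\}$ branch of a parabola with $C<0$, integrated by \eqref{cigarlike}: here $u$ is confined to the bounded interval $(C,0)$, so $K=-au$ is positive and bounded, and the end $h\to\mathrm{const}$ as $r\to-\infty$ is complete.

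It then remains to understand the pinched end $h\to0$ of this surviving family and to parametrize it. Normalizing the pinch to $r=0$, the vertex value gives the boundary slope $h'(0)=u(0)=C<0$, which produces a cone point of angle $\alpha=-2\pi h'(0)=-2\pi C$; since this slope is independent of $a$ while $a$ acts purely as the homothety $r\mapsto\lambda r,\ h\mapsto\lambda h,\ a\mapsto a/\lambda^2$, letting $C$ range over $(-\infty,0)$ realizes every angle $\alpha\in(0,+\infty)$ exactly once up to scaling. The distinguished value $\alpha=2\pi$ (that is $C=-1$) requires verifying genuine smoothness at the tip, not merely a $2\pi$ cone: differentiating $h''=ahh'$ repeatedly and evaluating at the pole forces all even-order derivatives of $h$ to vanish, so the metric is $\mathcal C^\infty$ there — this is the cigar soliton, and every other angle gives a cone-cigar.

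I expect the main obstacle to be this final step: both the smoothness verification at the tip and the bookkeeping that converts the single boundary slope $h'(0)$ into the correct cone angle and confirms that the full range $\alpha\in(0,+\infty)$ is swept out once modulo homothety. The completeness and lower-curvature-bound checks, by contrast, reduce to reading monotonicity off the explicit $\tan$, $\tanh$, and rational solutions together with the clean identity $K=-au$.
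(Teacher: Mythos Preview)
Your proposal is correct and follows essentially the same route as the paper: both arguments use the first integral $H(h,u)=\tfrac{a}{2}h^2-u$ to organize orbits by the vertex parameter $C$, invoke the explicit $\tan$/$\tanh$/rational solutions to settle completeness, and finish by reading the cone angle off $h'(0)$ and checking smoothness at the tip via the vanishing of even-order derivatives. Your explicit use of the identity $K=-au$ (which the paper leaves implicit, arguing instead through the sign of $u'$) is a tidy simplification that makes the curvature-bounded-below criterion transparent, but it does not constitute a genuinely different method.
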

\emph{Remark.} There exist other steady gradient solitons, with curvature not bounded below, as described above.

Pictures of a cigar soliton and a cone-cigar soliton are shown in Figures \ref{cigar_pic} and \ref{conecigar_pic}.

\section{Shrinking solitons} \label{sec_shrink}

In this section we study the shrinking solitons ($\epsilon=-1$), besides the round sphere and the spherical footballs with two equal cone
angles found in section \ref{sec_constcurv}. When $\epsilon=-1$, the metric of $\mathcal M$ is determined by a real-valued function $h(r)$
satisfying the second order ODE
\begin{equation}
 h'' -ahh' + \frac{h}{2}=0 , \label{2dsolit_shrink_eq}
\end{equation}
or equivalently the system 
\begin{equation}
 \left\{ \begin{array}{rcl}
   h' &=& u \\
   u' &=& (au-\frac{1}{2})h .
  \end{array} \right. \label{2dsolit_shrink_sys} 
\end{equation}

The phase portrait of this ODE system with $a=1$ is shown in Figure~\ref{2dsolit_shrink_pp}.
\begin{figure}[ht]
 \centering
\includegraphics[width=0.5\textwidth]{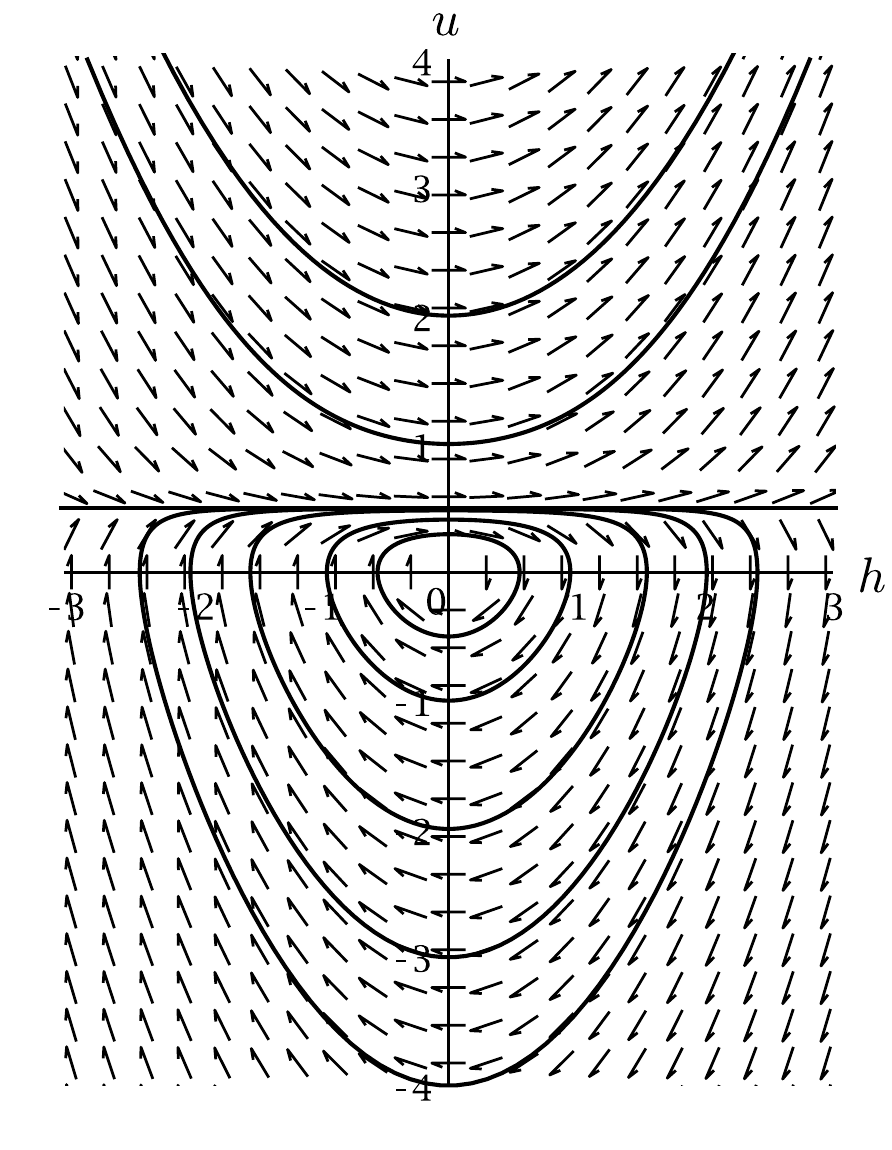}
\caption{Phase portrait of the system \eqref{2dsolit_shrink_sys} with $a=1$.} \label{2dsolit_shrink_pp}
\end{figure}
This phase portrait has a critical point at $(h,u)=(0,0)$ of type center, and a horizontal isocline (points such that $u'=0$) at the line
$u=\frac{1}{2a}$.
Each curve on this $hu$-plane corresponds to a solution $h$, and the intersection with the vertical axis $\{h=0\}$ are at $u(0)$ and $u(A)$,
which stand for the cone angles. Indeed, only half of each curve is enough to define the soliton, the one lying in the $\{ h>0 \}$
half-plane, since we can choose the sign of $h$ because only $h^2$ is used to define the metric.

All curves in the phase portrait represent rotationally symmetric soliton metrics over, a priori, a topological
cylinder. Closed curves (that intersect twice the axis $\{h=0\}$) are actually metrics over a doubly pinched cylinder, thus a
topological sphere with two cone points, giving the so called \emph{football solitons}. Open curves only intersect once the $\{h=0\}$ axis,
and hence are metrics over a topological plane. If the
intersection of any curve with the $\{h=0\}$ axis occurs at $u=\pm 1$, then the metric extends smoothly to this point (truly $\mathcal
C^\infty$ since derivating \eqref{2dsolit_shrink_eq} all even-order derivatives vanish at this point).  For instance, in
Figure~\ref{2dsolit_shrink_pp} there is only one curve associated to a \emph{teardrop soliton}, namely the one intersecting the vertical
axis at
some value $u(0)\in(0,\frac{1}{2})$ and at $u(A)=-1$. There is also a smooth soliton metric on $\mathbb
R^2$, namely the one associated with the
curve passing through $(h,u)=(0,1)$, and all other curves represent solitons over cone surfaces. The separatrix line,
$u=\frac{1}{2a}$,
represents the solution $h(r)=\frac{r}{2a}+c_0$, which stands for the metric $dr^2 + \frac{1}{4a^2} r^2 d\theta^2$. This is a flat
metric on the cone of angle $\frac{\pi}{a}$, a cone version of the shrinking gaussian soliton, and we call it a \emph{shrinking gaussian
cone soliton} (the smooth \emph{shrinking gaussian soliton} is the case $a=\frac{1}{2}$).

Let us focus on the compact shrinking solitons.
\begin{lema} \label{lemafoot}
For every pair of values $0<\alpha_1 < \alpha_2 < \infty$, there exist a unique value $a>0$ such that the equation \eqref{2dsolit_eq} has
one solution satisfying the boundary conditions $h'(0)=\frac{\alpha_1}{2\pi}$ and $h'(A)=-\frac{\alpha_2}{2\pi}$.
\end{lema}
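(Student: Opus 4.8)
The plan is to turn this two-point boundary value problem into a single scalar equation by means of a first integral of the system \eqref{2dsolit_shrink_sys}, and then to dispatch that scalar equation with elementary calculus. A closed (football) soliton with the prescribed data corresponds to a trajectory of \eqref{2dsolit_shrink_sys} meeting the axis $\{h=0\}$ at the two heights $u(0)=\tfrac{\alpha_1}{2\pi}>0$ and $u(A)=-\tfrac{\alpha_2}{2\pi}<0$; by the phase portrait this is the sub-separatrix oval through $(0,u(0))$, which exists precisely when $u(0)<\tfrac{1}{2a}$, i.e. when $a<\pi/\alpha_1$. So the entire problem is to understand, for fixed $\alpha_1$, how the return height $u(A)$ depends on $a$.

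First I would produce a first integral. Setting $\Phi(h,u)=-\tfrac{h^2}{2}+\tfrac{u}{a}+\tfrac{1}{2a^2}\ln|au-\tfrac12|$, a direct computation shows $\tfrac{d}{dr}\Phi(h,u)=0$ along solutions of \eqref{2dsolit_shrink_sys}, so $\Phi$ is constant on each trajectory. Evaluating $\Phi$ at the two axis crossings $(0,u(0))$ and $(0,u(A))$, where the quadratic term disappears and $au-\tfrac12<0$, and clearing denominators, the closed-orbit condition reduces to $\psi(a\,u(0))=\psi(a\,u(A))$ with $\psi(t):=2t+\ln(\tfrac12-t)$ for $t<\tfrac12$.

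Next I would reduce to one equation in one unknown. Writing $x:=a\,u(0)=\tfrac{a\alpha_1}{2\pi}\in(0,\tfrac12)$ and $\rho:=\alpha_2/\alpha_1>1$, the second crossing is $a\,u(A)=-\tfrac{a\alpha_2}{2\pi}=-\rho x$, so a soliton with the required angles exists for the parameter $a$ exactly when $x$ solves $\Psi_\rho(x):=\psi(x)-\psi(-\rho x)=2(1+\rho)x+\ln\tfrac{1/2-x}{1/2+\rho x}=0$. Since $a\mapsto x=\tfrac{a\alpha_1}{2\pi}$ is a bijection of $(0,\pi/\alpha_1)$ onto $(0,\tfrac12)$, the lemma becomes the statement that $\Psi_\rho$ has a unique zero in $(0,\tfrac12)$.

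Finally I would settle existence and uniqueness of that zero by a shape analysis of $\Psi_\rho$. One checks $\Psi_\rho(0)=0$, $\Psi_\rho'(0)=0$, $\Psi_\rho''(0)=4(\rho^2-1)>0$, and $\Psi_\rho(x)\to-\infty$ as $x\to\tfrac12^-$; moreover $\Psi_\rho''(x)=-\tfrac{1}{(1/2-x)^2}+\tfrac{\rho^2}{(1/2+\rho x)^2}$ changes sign exactly once, at $x_*=\tfrac{\rho-1}{4\rho}$, from positive to negative. Hence $\Psi_\rho'$ rises from $0$ to a single maximum and then falls to $-\infty$, so $\Psi_\rho$ increases from $0$ to a positive maximum and then decreases monotonically to $-\infty$, vanishing at exactly one interior point $x^*$, which yields the unique value $a=\tfrac{2\pi x^*}{\alpha_1}$. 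I expect the one genuinely creative step, and the main obstacle, to be spotting the first integral $\Phi$; once it is available the reduction to $\Psi_\rho$ and the ensuing calculus are routine. (A more geometric variant would instead normalize $a=1$ via the scaling $h\mapsto\lambda h$, $a\mapsto a/\lambda$ and prove directly that the return map $u(0)\mapsto-u(A)/u(0)$ is a monotone bijection onto $(1,\infty)$ using the variational equation, but the first integral makes this monotonicity explicit and is cleaner.)
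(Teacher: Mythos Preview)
Your argument is correct and follows the same overall strategy as the paper: both proofs exploit the same first integral (your $\Phi$ is, after the normalization $v=ah$, $w=au$, an affine function of the paper's $H(v,w)=v^2-2w-\ln|2w-1|$) to reduce the boundary value problem to a scalar equation on the axis $\{h=0\}$.

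The difference lies in how the scalar equation is handled. The paper parameterizes the closed orbits by the level $k$ of the first integral, rewrites the axis condition as $|y|=ke^{y-1}$, and then invokes Hamilton's Lemma~10.7 to assert that the map $k\mapsto p/q=\alpha_1/\alpha_2$ is a bijection of $(0,1)$; the value of $a$ is recovered afterwards from either angle. You instead fix the ratio $\rho=\alpha_2/\alpha_1$ from the start, parameterize by $x=a\alpha_1/(2\pi)$, and prove directly, via the sign pattern of $\Psi_\rho''$, that $\Psi_\rho$ has a unique interior zero. Your route is more self-contained (no appeal to Hamilton's lemma) and gives the monotonicity in $a$ explicitly; the paper's route makes the one-parameter family of footballs more visible as level sets. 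Either way the substantive step is the first integral, and once it is in hand the two analyses are equivalent reformulations of one another.

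One small point worth making explicit in your write-up: the equality $\psi(x)=\psi(-\rho x)$ guarantees that $(0,\alpha_1/2\pi)$ and $(0,-\alpha_2/2\pi)$ lie on the \emph{same} level set of $\Phi$, but you should note (as the paper does via the phase portrait) that in the invariant strip $\{u<1/(2a)\}$ this level set is a single closed oval, so the two axis crossings are indeed joined by one trajectory with $h>0$ in between and finite transit time $A$.
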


Equivalently, the lemma asserts that there exists a value $a$ such that the phase portrait of the system \eqref{2dsolit_sys} has one
solution curve that intersect the vertical axis $\{h=0\}$ at $u(0)=\frac{\alpha_1}{2\pi}$ and $u(A)=-\frac{\alpha_2}{2\pi}$.

\begin{proof} 
We can normalize the system by 
$$\left\{ \begin{array}{rcl} v&=&ah \\  w&=&au \end{array} \right.$$
so that on this coordinates the system becomes 
\begin{equation}
 \left\{ \begin{array}{rcl}
   v' &=& w \\
   w' &=& (w-\frac{1}{2})v
  \end{array} \right. \label{2dsolit_sys_norm}
\end{equation}
This would be the same system as \eqref{2dsolit_shrink_sys} with $a=1$, which is indeed shown on Figure~\ref{2dsolit_shrink_pp}.

The system \eqref{2dsolit_sys_norm} has the following first integral,
$$H(v,w) = v^2-2w-\ln|2w-1|$$
that is, the solution curves of the system are the level sets of $H$. Indeed, derivating $H(v(r),w(r))$ with respect to $r$,
\begin{align*}
 \frac{\partial}{\partial r} H(v,w) & = 2 v v' -2w' -\frac{2w'}{2w-1} \\
 &= 2vw -2 \left( w-\frac{1}{2} \right) v \left( 1+\frac{1}{2w-1} \right) =0 .
\end{align*}

The cone angle conditions are $\alpha_1=\frac{2\pi w(0)}{a}$, $\alpha_2 =- \frac{2\pi w(A)}{a}$, while $v(0)=v(A)=0$. Thus, the function
$w$ evaluated at $0$ and $A$ satisfies
$$H(0,w)=2w+\ln|2w-1|=C$$
for some $C\in\mathbb R$. This is equivalent, via $2w-1=-y$ and $e^{C}=k$, to the equation
\begin{equation}
 |y|=ke^{y-1}  \label{eqham}
\end{equation}
(cf. \cite{Hamilton_surfaces}). Although not expressable in terms of elementary functions, this equation has three solutions for $y$, one
for negative $y$ and
two for positive $y$ (see Figure~\ref{ploteqham}). The two positive solutions of \eqref{eqham} are the intersection of the exponential
function $e^{y-1}$ with the line
$\frac{1}{k}y$ with slope $\frac{1}{k}$. These two positive solutions are associated to a compact connected component of $H(v,w)=C$, whereas
the negative solution is associated to a noncompact component of $H$ that represent noncompact soliton surfaces.
\begin{figure}[ht]
 \centering
\includegraphics[width=0.4\textwidth]{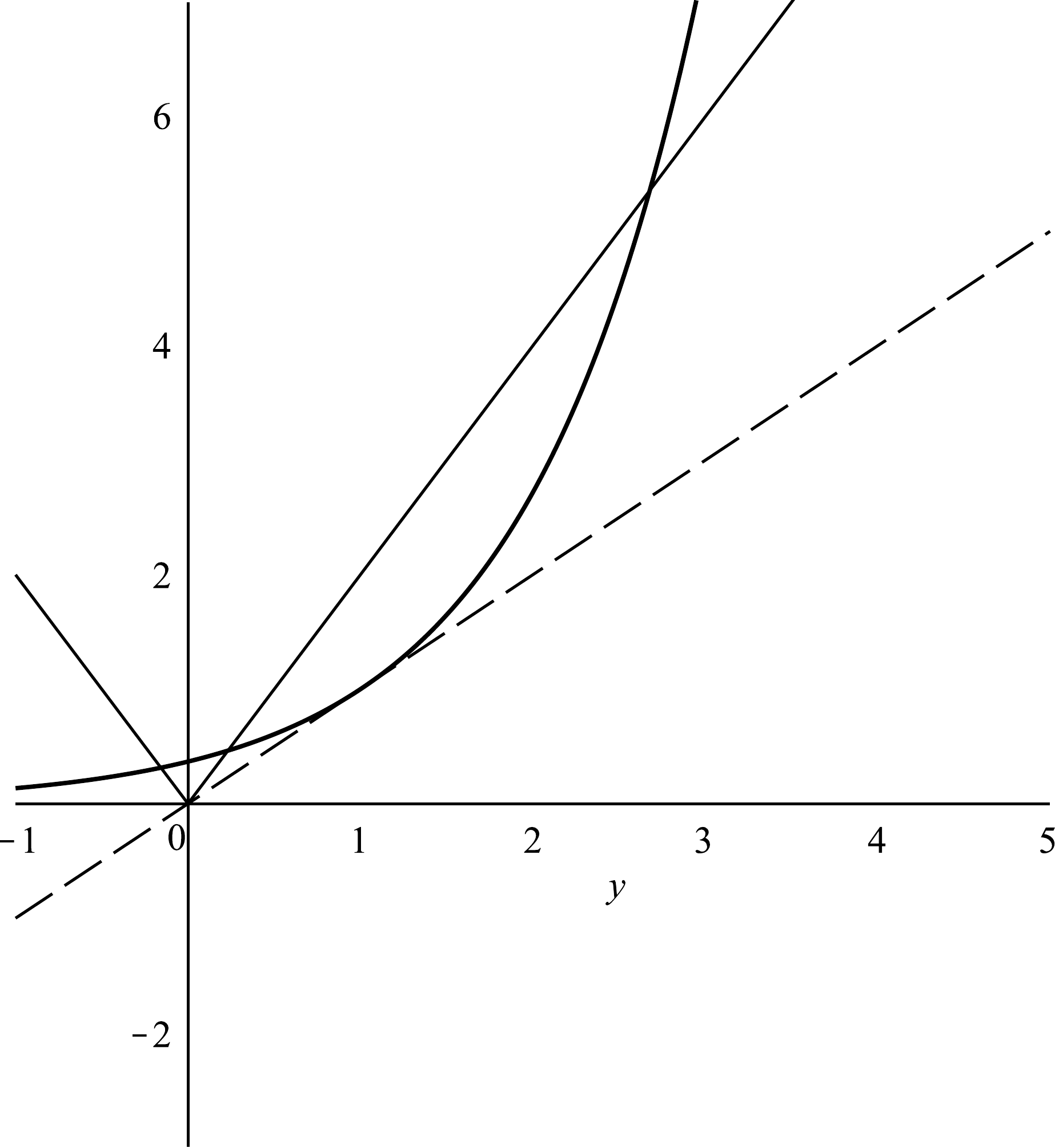}
\caption{The graphs of the exponential $e^{y-1}$ and $\frac{1}{k}|y|$. } \label{ploteqham}
\end{figure}
The two positive solutions of \eqref{eqham} exist only when $k\in(0,1)$ and actually these two solutions are equal when $k=1$ and
the line is
tangent to the exponential function at $y=1$. These two solutions $y_1$, $y_2$ of \eqref{eqham} are therefore located on $(0,1)$ and
$(1,+\infty)$ respectively, and can be expressed as 
$$y_1=1-p \quad , \quad y_2=1+q$$
with $p,q\geq 0$. 
The two cone angles, having assumed $\alpha_1<\alpha_2$, are then expressed as
$$\alpha_1 = 2\pi h'(0) = 2\pi u(0) = \frac{2\pi w(0)}{a} = \frac{2\pi}{a} \frac{1-y_1}{2} = \frac{\pi}{a} p $$
$$\alpha_2 = -2\pi h'(A) = -2\pi u(A) = -\frac{2\pi w(A)}{a} = -\frac{2\pi}{a} \frac{1-y_2}{2} = \frac{\pi}{a} q $$
and their quotient is
$$\frac{\alpha_1}{\alpha_2} = \frac{p}{q} .$$
Let $\Psi:(0,1) \rightarrow \mathbb R$ be the mapping $$k\mapsto \Psi(k) = \frac{p}{q} .$$ 
The function $\Psi$ is injective and the quotient $\Psi(k)$ ranges from $0$ to $1$ when varying $k\in(0,1)$. This is proven in
\cite{Hamilton_surfaces}, Lemma 10.7, we can visualize its
graph in Figure~\ref{football_psi}.
\begin{figure}[ht]
 \centering
\includegraphics[width=0.5\textwidth]{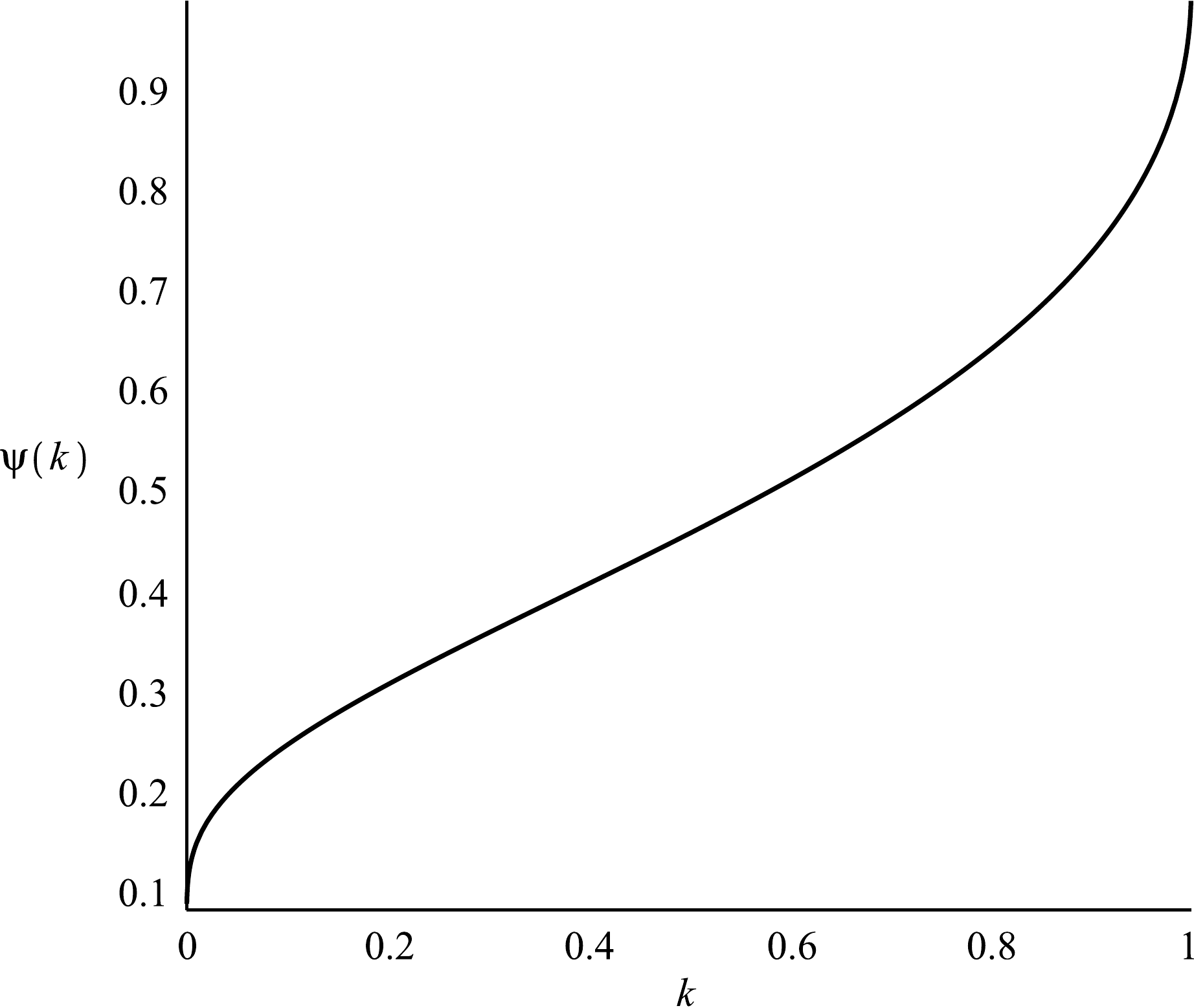}
\caption{The function $\Psi(k)$.}\label{football_psi}
\end{figure}
 Therefore, for any pair of chosen angles 
$\alpha_1 < \alpha_2$ there exist $k=\Psi^{-1}(\frac{\alpha_1}{\alpha_2})$, such that the equation \eqref{eqham} has two positive solutions
$y_1$, $y_2$. This yields two values $p=1-y_1$, $q=y_2-1$, and finally we recover 
$$a=\frac{\alpha_1}{\pi p} = \frac{\alpha_2}{\pi q} .$$
This value makes the system \eqref{2dsolit_sys} and the equation \eqref{2dsolit_eq} to have the required solutions.
\end{proof}

We summarize our discussion in the following theorem.

\begin{tma}
 The only complete shrinking gradient Ricci solitons on a surface $\mathcal M$ with curvature bounded below are:
\begin{itemize}
 \item Spherical surfaces (including the round sphere and the round football solitons of constant curvature and two equal cone angles). 
 \item The football and teardrop solitons, if $\mathcal M$ is compact and has one or two different cone points. These cone angles can be any
real positive value.
 \item The smooth shrinking flat gaussian soliton.
 \item The shrinking flat gaussian cones.
\end{itemize}
\end{tma}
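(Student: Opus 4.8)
The statement is a classification summary, so the plan is to read the list off the global phase portrait of the normalized system \eqref{2dsolit_sys_norm} together with its first integral $H(v,w) = v^2 - 2w - \ln|2w-1|$, and to overlay the geometric dictionary established earlier: a zero of $h$ is a pinching (a pole or a cone point), the value of $h'$ at such a zero encodes the cone angle through $\alpha = 2\pi|h'|$, and along any orbit the Gaussian curvature is $K = -(ah' + \tfrac{\epsilon}{2}) = \tfrac12 - au$. Hence $K$ is bounded below on an orbit \emph{if and only if} $u$ is bounded above (as $a>0$), and this single inequality is what does the selecting among all orbits.

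First I would isolate the separatrix $\{u = 1/(2a)\}$ (equivalently $\{w = 1/2\}$), which is an invariant line carrying the explicit solution $h(r) = \tfrac{r}{2a} + c_0$. On it $K \equiv 0$, and the induced metric $dr^2 + \tfrac{1}{4a^2}r^2\,d\theta^2$ is the flat cone of angle $\pi/a$: this is the \emph{shrinking gaussian cone}, degenerating to the smooth \emph{shrinking gaussian soliton} precisely at $a = \tfrac12$ (angle $2\pi$). It is complete, since $h \to \infty$ as $r \to \infty$.

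Next I would use $H$ to analyze the two regions cut out by the separatrix. Below it, for $w < 1/2$, write $H = v^2 + g(w)$ with $g(w) = -2w - \ln(1-2w)$; one checks $g$ is $U$-shaped with a single minimum $g(0)=0$ and $g \to +\infty$ as $w \to -\infty$ and as $w \to \tfrac12^-$, so every level set $\{H = C\}$ with $C>0$ is a closed oval encircling the center $(0,0)$. Such an orbit meets $\{h=0\}$ exactly twice, so $h$ runs $0 \to h_{\max} \to 0$ and the surface is a doubly-pinched cylinder, i.e. a topological sphere carrying two cone points; being compact it has bounded curvature. Lemma \ref{lemafoot} supplies, for every pair $0 < \alpha_1 < \alpha_2$, a unique $a$ realizing these angles (an unequal-angle \emph{football}); when one crossing is the smooth value $u = \pm 1$ the cone there opens to $2\pi$ and the surface is a \emph{teardrop}. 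The equal-angle footballs and the round sphere are the constant-curvature solitons already isolated in Section \ref{sec_constcurv}. Above the separatrix, for $w > 1/2$, the function $g_+(w) = -2w - \ln(2w-1)$ is strictly decreasing from $+\infty$ to $-\infty$, so each level set is an open arc along which $w$, hence $u$, increases without bound; these are topological planes (complete, with a single cone or smooth pole) on which $K = \tfrac12 - au \to -\infty$, violating the lower curvature bound, and are therefore discarded — this is exactly why the smooth non-flat soliton on $\mathbb{R}^2$ does not survive into the final list.

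Assembling the separatrix case, the closed orbits, and the excluded open orbits yields precisely the stated families. The only genuinely non-formal input is Lemma \ref{lemafoot}, whose heart is the injectivity and surjectivity onto $(0,1)$ of the angle-ratio function $\Psi$ (Hamilton's Lemma 10.7); this is the step I expect to be the main obstacle, since it is what guarantees both existence and uniqueness of a football for each prescribed pair of angles. A secondary care is the smoothness check at the poles — differentiating \eqref{2dsolit_shrink_eq} repeatedly to verify that all even-order derivatives of $h$ vanish when $u = \pm 1$ — together with completeness, which is automatic for the compact orbits and follows from $h \to \infty$ for the flat separatrix solutions.
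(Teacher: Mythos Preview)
Your proposal is correct and follows essentially the same route as the paper: reading the classification off the phase portrait of the normalized system via the first integral $H$, identifying the separatrix $\{w=\tfrac12\}$ with the flat gaussian solitons, the bounded closed orbits below it with the footballs and teardrops (via Lemma~\ref{lemafoot}), and discarding the open orbits above it for having $u$ unbounded above and hence $K=\tfrac12-au$ unbounded below. If anything, your exclusion argument is more explicit than the paper's own Section~\ref{sec_shrink}, which mentions the smooth noncompact soliton through $(0,1)$ but leaves its removal from the final list implicit.
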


\section{Expanding solitons} \label{sec_expand}

We end our classification with the expanding solitons ($\epsilon=1$). The equation \eqref{2dsolit_eq} and the system \eqref{2dsolit_sys}
are in this case
\begin{equation} h''-ahh'+\frac{h}{2}=0 \label{2dsolit_expand_eq}\end{equation}
and
\begin{equation}
 \left\{ \begin{array}{rcl}
   h' &=& u \\
   u' &=& \left( au+\frac{1}{2} \right) h .
  \end{array} \right. \label{2dsolit_expand_sys}
\end{equation}
The phase portrait of \eqref{2dsolit_expand_sys} is shown in Figure~\ref{2dsolit_expand_pp}.
\begin{figure}[ht]
 \centering
\includegraphics[width=0.6\textwidth]{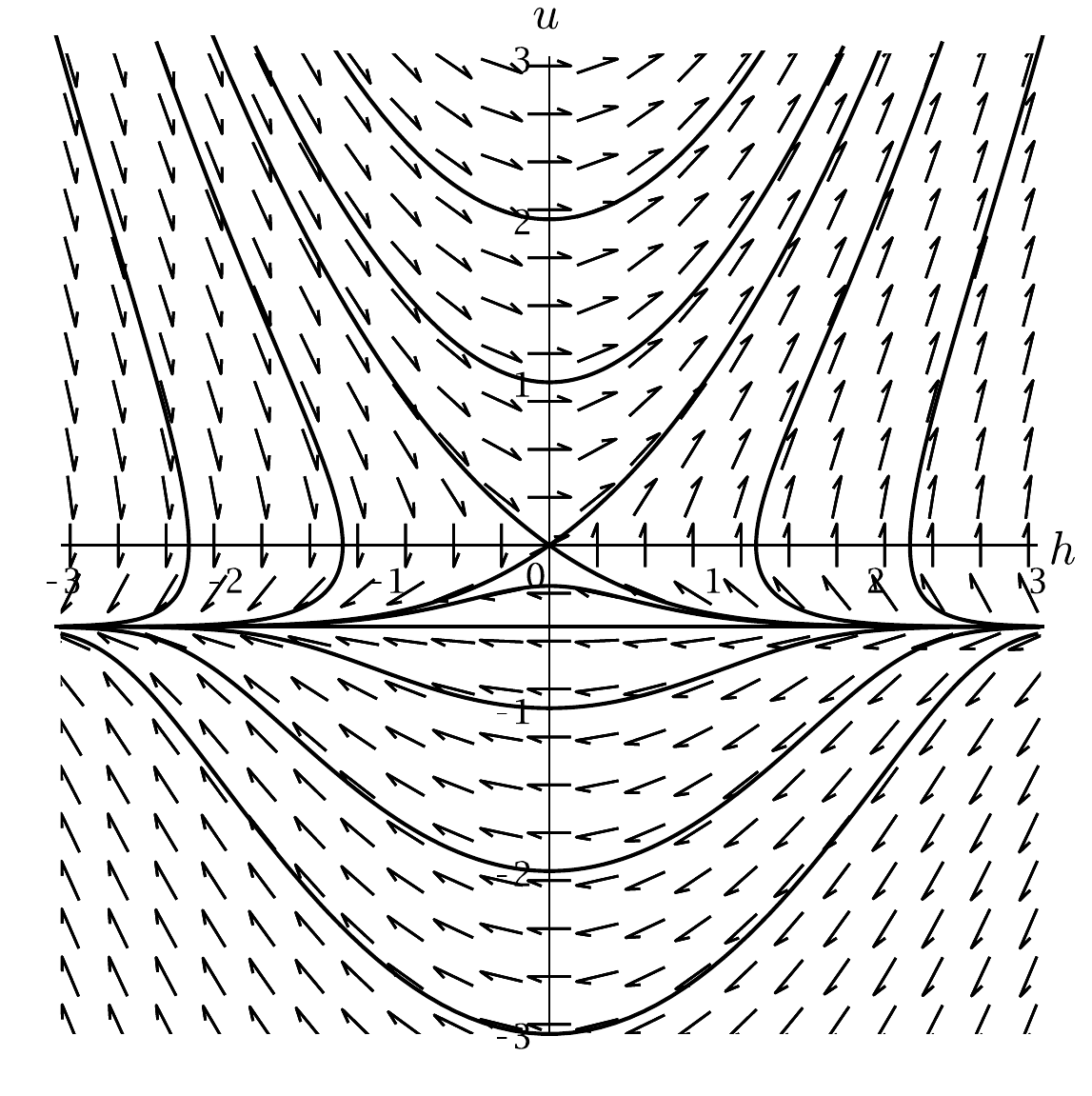}
\caption{Phase portrait of the system \eqref{2dsolit_expand_sys} with $a=1$.} \label{2dsolit_expand_pp}
\end{figure}
We can rescale the system \eqref{2dsolit_expand_sys} with the change
$$\left\{ \begin{array}{rcl} v&=&ah \\  w&=&au \end{array} \right.$$
so that on this coordinates the system becomes 
\begin{equation}
 \left\{ \begin{array}{rcl}
   v' &=& w \\
   w' &=& (w+\frac{1}{2})v
  \end{array} \right. \label{2dsolit_expand_sys_norm}
\end{equation}
which is exactly the system \eqref{2dsolit_expand_sys} with $a=1$. The phase portrait of this system is indeed
shown on Figure~\ref{2dsolit_expand_pp}. We will
study the trajectories of the normalized system and next we will discuss the geometrical interpretation of each trajectory.

The system \eqref{2dsolit_expand_sys_norm} has a critical point ($v'=w'=0$) at
$(0,0)$. It has an horizontal isocline ($w'=0$) at the line $L=\{w=-\frac{1}{2}\}$, that is also an orbit solution, and hence no other
trajectory can cross it. The vertical axis $\{v=0\}$ is also an horizontal isocline. The horizontal axis $\{w=0\}$ is, on the other hand, a
vertical isocline ($v'=0$).

The linearization of the system \eqref{2dsolit_expand_sys_norm} at the critical point $(0,0)$ is
$$\left( \begin{array}{c} v' \\ w' \end{array} \right) =
\left( \begin{array}{cc} 0 & 1 \\ w+\frac{1}{2} & v \end{array} \right) 
\left( \begin{array}{c} v \\ w \end{array} \right). $$
The matrix of the linearized system at the critical point is $\left( \begin{array}{cc} 0 & 1 \\ \frac{1}{2} & 0 \end{array} \right)$, that
has determinant $-\frac{1}{2}<0$ and hence the critical point is a saddle point. The eigenvalues of this matrix are $\frac{1}{\sqrt{2}}$
and $-\frac{1}{\sqrt{2}}$, with eigenvectors respectively
$$\left( \begin{array}{c} \sqrt{2} \\ 1 \end{array} \right) \quad \mbox{and} \quad \left( \begin{array}{c} -\sqrt{2} \\ 1 \end{array}
\right).$$
These eigenvectors determine the two principal directions of the saddle point, from which four separatrix curves are emanating.

The system \eqref{2dsolit_expand_sys_norm} has the following first integral,
$$H(v,w) = v^2-2w+\ln|2w+1|$$
that is, the solution curves of the system are the level sets of $H$. Indeed, derivating $H(v(r),w(r))$ with respect to $r$,
\begin{align*}
 \frac{\partial}{\partial r} H(v,w) & = 2 v v' -2w' +\frac{2w'}{2w+1} \\
 &= 2vw -2 \left( w+\frac{1}{2} \right) v \left( 1-\frac{1}{2w+1} \right) =0 .
\end{align*}

From the system, and more apparently from the first integral, it is clear that the phase portrait is symmetric with respect to the axis
$\{v=0\}$. We will only study then the trajectories on the right-hand half-plane $\{v>0\}$. Actually this restriction agrees
with the geometric assumption of $h>0$.

Let us consider a trajectory passing through a point in the quadrant $\{v>0 , w>0\}$. Then $v'>0$ and $w'>0$ and hence the curve moves
upwards and rightwards. More carefully, as soon as $v>\delta_1>0$ and $w>\delta_2>0$, both derivatives are bounded below away from zero,
$v'>\tilde\delta_1>0$ and $w'>\tilde\delta_2>0$, and therefore $v$ and $w$ tend to $+\infty$. We can further evaluate the asymptotic
behaviour from the first integral,
$$\frac{v^2}{2w}-1 + \frac{\ln|2w+1|}{2w} = \frac{C}{2w}$$

We take the limit as $r\rightarrow +\infty$ and since $w\rightarrow +\infty$, we get that 
$$\lim_{r\rightarrow +\infty} \frac{v(r)^2}{2w(r)}= 1 ,$$ 
so the orbit approaches
a parabola (the same parabolas of the steady case and asymptotic on the shrinking case).

We now inspect the separatrix $S$ emanating (actually sinking) from the critical point at the direction $(\sqrt{2},-1)$. The associated
eigenvalue is $-\sqrt{2}$ and hence the trajectory is approaching the saddle point (hence the sinking). The curve $S$ lies in the
$w'>0$ region, and cannot cross the horizontal isocline $L$. Therefore, the
separatrix when seen backwards in $r$ must be decreasing and bounded, and hence must approach a horizontal asymptote. This asymptote must be
$L$, since if the trajectory were lying in the region $w'>\delta>0$ for infinite time, it would come from $w=-\infty$,
which is absurd since it cannot cross the isocline $L$. Therefore, over this separatrix $S$, $v\rightarrow +\infty$ and $w\rightarrow
-\frac{1}{2}$ as $r\rightarrow -\infty$.

Any trajectory lying over $S$ will eventually enter in the upper right quadrant, and hence approach asymptotically the previously
mentioned parabolas. This is clear since $w'$ is positive and bounded below away from zero.

Let us study the trajectories below $S$. All these curves intersect the axis $\{v=0\}$, and we can consider the origin of the $r$ coordinate
as such that the intersection point with the axis occurs at $r=0$. Then, the region of the curves parameterized by $r<0$ lies in the $v>0$
half-plane. Since the curves are below $S$, they lie in the
lower right quadrant and hence $v'<0$ and $v\rightarrow +\infty$ as $r \rightarrow -\infty$. If the curve lies over $L$,
then $w'>0$, and if it lies below $L$, then $w'<0$. This means that the isocline is repulsive forward in $r$ and attractive backwards in
$r$. Therefore any curve lying below $S$ will have an asymptote as $r\rightarrow -\infty$ and as before this must be the isocline $L$, that
is, $v\rightarrow +\infty$ and $w\rightarrow -\frac{1}{2}$ as $r\rightarrow -\infty$.

Let us comment about the domain of $r$. We have stated that the trajectories below $S$ are parameterized for $r\in (-\infty,0]$, although a
priori it could be $r\in (-M,0]$ for some maximal $M$ (and hence $v\rightarrow +\infty$ as $r\rightarrow -M$, and these would represent
noncomplete metrics). This is not the case, since the trajectories are approaching $v'=-\frac{1}{2}$, and hence $v'$ is bounded ($|v'|< 1$
for $r$ less than some $r_0<0$), so $v$ cannot grow to $+\infty$ for finite $r$-time.

Finally, let us study the separatrix $S$ itself. This curve is parameterized by $r\in\mathbb R$, and $(v,w)\rightarrow (0,0)$ as
$r\rightarrow +\infty$ and $(v,w)\rightarrow (+\infty,-\frac{1}{2})$ as $r\rightarrow -\infty$ (this follows from the Grobman-Hartman
theorem in the end near the saddle point, and from the asymptotic $L$ on the other end). We can give a more detailed
description of the asymptotics. As $r \rightarrow -\infty$, we know that $w\rightarrow -\frac{1}{2}$, this is
$$\lim_{r\rightarrow -\infty}\frac{v'}{-\frac{1}{2}} = 1 .$$
Then, applying the l'Hôpital rule,
$$\lim_{r\rightarrow -\infty}\frac{v}{-\frac{1}{2}r} = 1 ,$$
or $v(r)\sim -\frac{1}{2}r$ as $r\rightarrow -\infty$.
This is valid for all the trajectories asymptotic to the horizontal isocline. 
Similarly, as $r\rightarrow +\infty$, we know that
$v,w\rightarrow 0$, but furthermore we know that their quotient tends to the slope of the eigenvector determining the separatrix, i.e.
$$\lim_{r\rightarrow +\infty}\frac{v}{w} = \lim_{r\rightarrow +\infty} \frac{v}{v'} = \frac{-1}{\sqrt{2}} ,$$
which is to say
$$\lim_{r\rightarrow +\infty}(\ln{v})'= \lim_{r\rightarrow +\infty} \frac{v'}{v} = -\sqrt{2} .$$
Then, by l'Hôpital rule,
$$ \lim_{r\rightarrow +\infty}\frac{(\ln{v})'}{-\sqrt{2}} = \lim_{r\rightarrow +\infty} \frac{\ln{v}}{-\sqrt{2}r}=1 .$$
this is, $v(r)\sim e^{-\sqrt{2}r}$ as $r\rightarrow +\infty$.

At this point we have established all the important features of the phase portrait in Figure~\ref{2dsolit_expand_pp}. With the unnormalyzed
system, in coordinates $(h,u)$, the phase portrait is just a scaling of the one in Figure~\ref{2dsolit_expand_pp} by the factor $a$, and
thus the horizontal isocline $L$ is at $\{u=-\frac{1}{2a}\}$. We now give the geometric interpretation of each trajectory.

All the trajectories above $S$ have solutions with unbounded positive $w$. This means that the gaussian curvature of the associated metric
$K=-(au+\frac{1}{2}) = -(w+\frac{1}{2})$
is not bounded below, and we will discard them. 

All the trajectories below the separatrix $S$ have bounded $w$ and therefore bounded curvature on the associated metric. More specifically,
the curves above the isocline $L$ will give metrics with negative curvature, and curves below $L$ will give metrics with positive
curvature. These curves will intersect the $\{h=0\}$ axis at $b<0$, and the associated metric will have a cone point of angle
$$\beta=-2\pi b$$ 
at the point of coordinate $r=0$. On the other end, the function $h(r)$ is asymptotic to $-\frac{1}{2a}r$ (recall
that the parameter is $r\in (-\infty,0 ]$) and the metric will be asymptotic to the wide part of a flat cone of angle 
$$\alpha = \frac{\pi}{a} .$$ 
We call these solitons the \emph{$\alpha \beta$-cone solitons}. These solitons have positive curvature if $\alpha<\beta$
($b<\frac{-1}{2a}$) and negative if $\alpha>\beta$ ($b>\frac{-1}{2a}$). In the case $\alpha=\beta$ ($b=\frac{-1}{2a}$) we are in the case
of the isocline $L$. This line $h' = u = -\frac{1}{2a}$, has as solution the parameterization
$$h(r)=-\frac{1}{2a}r+C$$
which represents a \emph{flat expanding gaussian cone soliton}, with cone angle $\frac{\pi}{a}$. The special case $a=\frac{1}{2}$
yields a smooth metric at $r=0$, thus we have a flat metric on the plane known as the \emph{flat expanding gaussian soliton}.

Other remarkable cases are those with $\beta=2\pi$ ($b=-1$), because the cone point at the apex is now blunted and the surface is smooth
(we can check from equation \eqref{2dsolit_expand_eq} that all even-order derivatives vanish at $r=0$),
we call them the \emph{blunted $\alpha$-cone solitons}. The angle $\alpha$ may be less or greater than $2\pi$ (but in the later case it
cannot be embedded symmetrically in $\mathbb R^3$).

We interpret now the separatrix $S$. This is the limiting case as the angle $\beta$ tends to zero. In this case the parameter $r$ is
not on $(-\infty,0]$ but on the whole $\mathbb R$ and thus $h(r)$ defines a smooth complete metric on the cylinder. As $r\rightarrow
+\infty$, the function $h(r)$ is asymptotic to $\frac{1}{a} e^{-\sqrt{2}r}$, that defines a hyperbolic metric of constant curvature $-2$.
This
hyperbolic metric on a cylinder is called a \emph{hyperbolic cusp}. The separatrix $L$ represents a soliton metric that approaches the
thin part of a hyperbolic cusp in one end, and the wide part of a flat cone on the other. There is still freedom to set the angle $\alpha$,
and we call these the \emph{cusped $\alpha$-cone solitons}.

Finally, there is still one more family of two dimensional gradient solitons, namely the universal cover of the cusped $\alpha$-cones.
These solitons are metrics on $\mathbb R^2$ locally isometric to the cusped cones. These solitons are not rotationally symmetric, but
translationally symmetric, i.e. there is not a $\mathbb S^1$ group but a $\mathbb R$ group acting by isometries. The plane $\mathbb R^2$
with any of this metrics has a fixed direction (given by $\grad f$) such that a straight line following this direction (that is also a
geodesic of the soliton metric) transits gradually from a region of hyperbolic curvature on one end to a region of flat curvature on the
other. Any translation on the direction perpendicular to $\grad f$ (this is, in the direction of $J(\grad(f)$ ) is an isometry on these
metrics. We call these \emph{flat-hyperbolic soliton planes}.

We summarize our discussion in the following theorem.

\begin{tma}
 The only complete expanding gradient Ricci solitons on a surface $\mathcal M$ with curvature bounded below are:
\begin{itemize}
 \item Hyperbolic surfaces (possibly with cone angles).
 \item The $\alpha\beta$-cone solitons, for every pair of cone angles $\alpha,\beta>0$.
 \item The smooth complete blunt $\alpha$-cones, that are $\alpha\beta$-cones with $\beta=2\pi$.
 \item The smooth expanding flat gaussian soliton.
 \item The expanding flat gaussian cones.
 \item The smooth cusped $\alpha$-cone in the cylinder.
 \item The flat-hyperbolic solitons on the plane, that are universal coverings of the cusped cones.
\end{itemize}
\end{tma}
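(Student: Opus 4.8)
The plan is to read off the classification directly from the level-set structure of the first integral $H(v,w)=v^2-2w+\ln|2w+1|$ of the normalized system \eqref{2dsolit_expand_sys_norm}, since by the earlier reduction every nonconstant-curvature expanding soliton corresponds (after the rescaling $v=ah$, $w=au$) to a trajectory of this system, and conversely. First I would invoke the symmetry $v\mapsto -v$ of $H$ to restrict attention to the half-plane $\{v>0\}$, which is exactly the geometric normalization $h>0$. The separatrix $S$ through the saddle and the invariant isocline $L=\{w=-\tfrac12\}$ partition $\{v>0\}$ into regions, and the whole argument consists of assigning a geometric model to each trajectory in each region and then discarding those that violate completeness or the lower curvature bound.

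For the pruning step I would use the formula $K=-(w+\tfrac12)$ for the Gaussian curvature: every trajectory lying above $S$ has $w\to+\infty$, so $K$ is unbounded below and these are discarded, leaving only $S$, $L$, and the trajectories below $S$. For a trajectory strictly below $S$ I would locate its unique intersection $b<0$ with $\{v=0\}$, declare this the point $r=0$, and read the cone angle as $\beta=-2\pi b$ from the boundary condition $h'(0)=\beta/2\pi$; on the other end the asymptotic analysis gives $w\to-\tfrac12$ and $v\sim -\tfrac12 r$ as $r\to-\infty$, so the metric is asymptotic to a flat cone of angle $\alpha=\pi/a$. Completeness of this end follows because $v'$ is bounded, so $r$ ranges over all of $(-\infty,0]$; these are the $\alpha\beta$-cones. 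I would then single out the special subfamilies: the isocline $L$ itself, whose orbit $h(r)=-\tfrac{1}{2a}r+C$ is the flat gaussian cone (smooth, the flat gaussian soliton, precisely when $a=\tfrac12$), and the trajectories with $b=-1$, where derivating \eqref{2dsolit_expand_eq} shows all even-order derivatives vanish at $r=0$, giving the smooth blunt $\alpha$-cones.

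The remaining cases are the separatrix $S$ and the non-rotationally-symmetric solitons. On $S$ the parameter ranges over all of $\mathbb R$: the Grobman--Hartman theorem near the saddle gives $v\sim e^{-\sqrt2 r}$ as $r\to+\infty$, which I would check defines a complete hyperbolic cusp of curvature $-2$, while the $L$-asymptote at $r\to-\infty$ again produces the flat cone of angle $\alpha$; these are the cusped $\alpha$-cones. Finally, the third alternative of the rotational-symmetry lemma, where only an $\mathbb R$-action is available, yields the universal covers of the cusped cones, namely the flat-hyperbolic planes; and the degenerate case $a=0$ (constant curvature) accounts for the hyperbolic surfaces, possibly with cone angles. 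Collecting these models exhausts the enumeration.

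I expect the main obstacle to be the regularity and completeness bookkeeping rather than the qualitative phase-portrait picture: one must verify rigorously that the domain of $r$ extends all the way to the relevant endpoints (so the metrics are genuinely complete and not merely defined on a finite interval), that the $r\to+\infty$ asymptotic on $S$ really matches a hyperbolic cusp of the stated constant curvature, and that the even-derivative condition at $b=-1$ indeed upgrades the cone point to a $\mathcal C^\infty$ point. Each of these is a local computation, but care is needed to ensure the asymptotic estimates are strong enough to conclude smoothness and completeness rather than merely leading-order behavior.
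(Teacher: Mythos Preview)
Your proposal is correct and follows essentially the same approach as the paper: the paper's Section~\ref{sec_expand} is precisely a phase-portrait analysis of \eqref{2dsolit_expand_sys_norm} organized around the saddle at the origin, the invariant line $L=\{w=-\tfrac12\}$, and the separatrix $S$, using the same first integral $H(v,w)=v^2-2w+\ln|2w+1|$, the same curvature formula $K=-(w+\tfrac12)$ to discard trajectories above $S$, the same boundedness-of-$v'$ argument for completeness of the $\alpha\beta$-cones, and the same Grobman--Hartman/l'H\^opital asymptotics $v\sim e^{-\sqrt2 r}$ on $S$ to identify the cusped end. The only cosmetic slip is a sign in your boundary condition (since $b<0$ and $r\in(-\infty,0]$, one has $h'(0)=b=-\beta/2\pi$, not $\beta/2\pi$), but your conclusion $\beta=-2\pi b$ matches the paper.
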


\appendix
\section{Gallery of embedded solitons} \label{sec_gallery}

Just for aesthetics, we can embed some of the solitons we described into $\mathbb R^3$ and visualize them numerically as surfaces with the
inherited metric from the ambient euclidean space. If we want to keep the rotational symmetry apparent, however, we cannot embed into
$\mathbb R^3$ a cone point of angle greater than $2\pi$, and we can't embed a rotational surface whose parallels have length $L=2 \pi\ h(R)$
if $R<\frac{L}{2\pi}$.

In order to do this, we use the metric in polar coordinates $(r,\theta) \in [0,A] \times [0,2\pi]$,
$$dr^2 + h(r)^2 \ d\theta^2 .$$
We recall that $r$ is the arc parameter of the $\{\theta=cst\}$ curves (meridians), and
that the $\{r=cst\}$ curves (parallels) are circles of radius $h(r)$ parameterized by $\theta\in[0,2\pi]$. Therefore, we can use the
$h,\theta$ as polar coordinates on the
plane, and find an appropriate third coordinate $z$ (height). When we put the stacked parallels of radius $h(r)$ at height
$z(r)$, we obtain a rotational surface whose meridians have length parameter $r$. Thus,
$$dr^2 = dh^2 + dz^2$$
or equivalently
$$\frac{dz^2}{dr^2}=1-\frac{dh^2}{dr^2}$$
which defines $z=z(r)$ as satisfying
$$(z')^2 = 1-u^2$$
with the convention that $h'=u$. Hence, to obtain an embedded surface satisfying the soliton system \eqref{2dsolit_sys} it is sufficient to
integrate the first order vector ODE
$$\left\{ \begin{array}{rcl}
   h' &=& u \\
   u' &=& (au+\frac{\epsilon}{2})h \\
   z' &=& \sqrt{1-u^2}
  \end{array} \right. $$
with initial conditions $h(0)=0$, $u(0)=b$, $z(0)=0$. Once obtained a numerical solution for $h(r)$, $u(r)$ and $z(r)$, we
can fix a value $A>0$ and then plot the set of points
$$\{(h(r)\cos\theta,h(r)\sin\theta,z(r))\in \mathbb R^3 \ \big| \ r\in[0,A] \ ,\ \theta\in [0,2\pi]\}.$$

Next we show some embedded solitons. These were obtained with the following Maple code:

\begin{verbatim}
> epsilon:=1; a:=1; b:=-1; A:=10;
> sys:= diff(h(r),r)=u(r), diff(u(r),r)=(a*u(r)+epsilon/2)*h(r), 
  diff(z(r),r)=sqrt(1-u(r)^2):
  fns:= dsolve( {sys , h(0)=0, u(0)=b, z(0)=0}, numeric, 
  output=listprocedure):
  hh:=rhs(fns[2]): uu:=rhs(fns[3]): zz:=rhs(fns[4]):
> plot3d([hh(r)*cos(theta), hh(r)*sin(theta),zz(r)],
  r=0..A, theta=0..2*Pi, scaling=constrained,grid=[40,40]);
\end{verbatim} 

\newlength{\tam}
\setlength{\tam}{9em}

\begin{figure}[p]

\centering
\includegraphics[height=\tam]{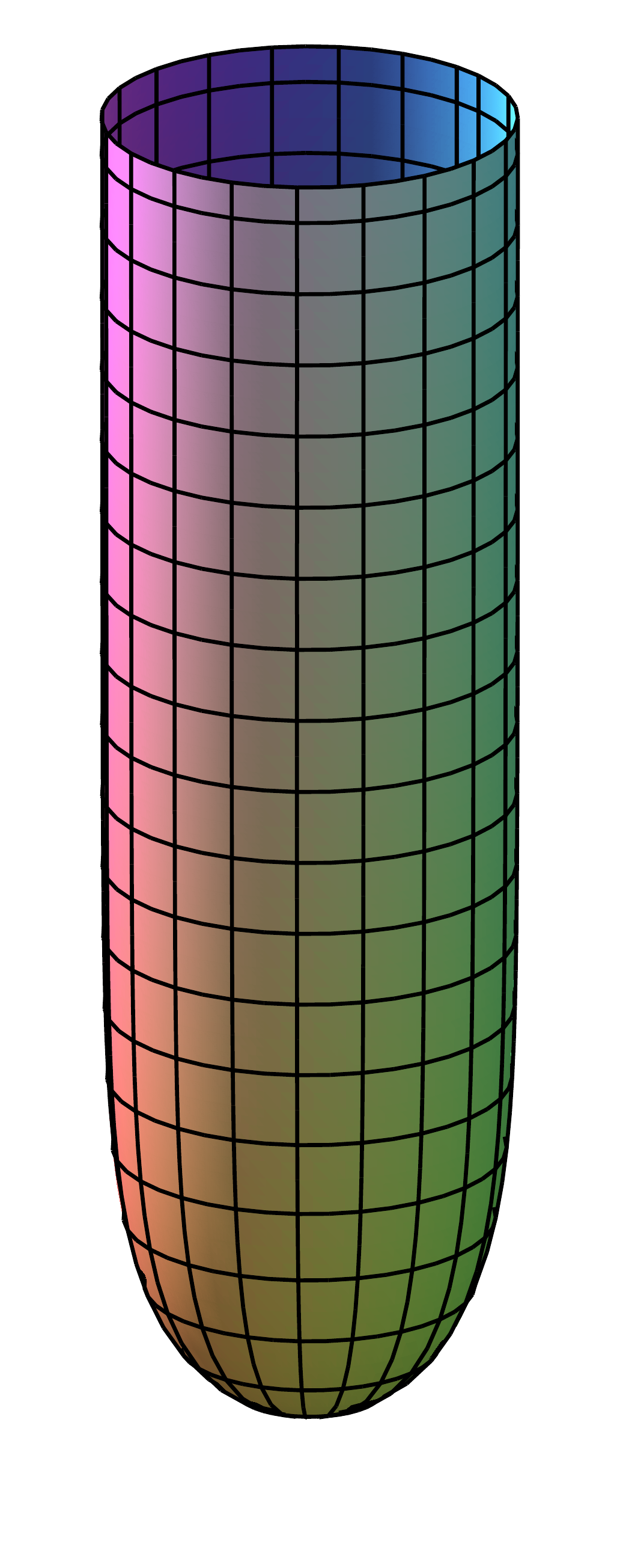}
\caption{ A cigar soliton ($\epsilon=0$, $a=1$, $b=-1$).} \label{cigar_pic}
\end{figure}

\begin{figure}
\centering
\includegraphics[height=\tam]{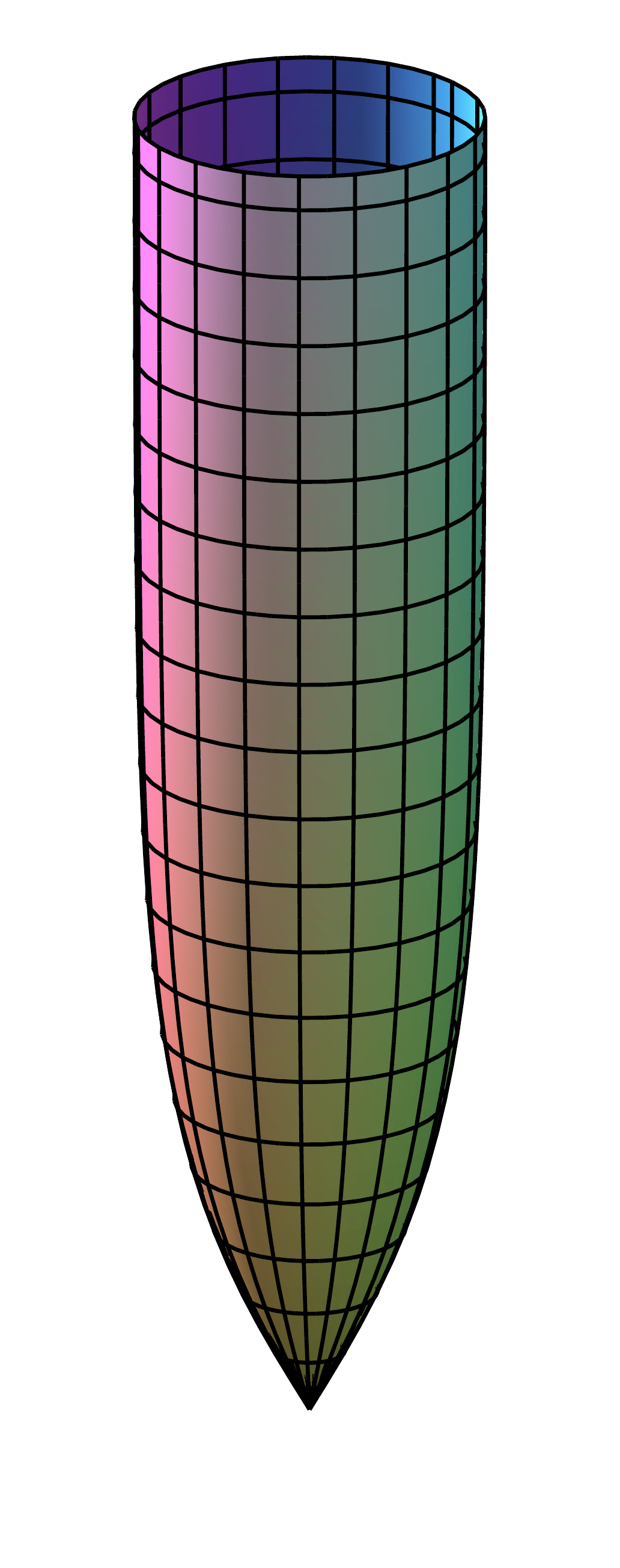}
\caption{ A cone-cigar soliton with cone angle $180^\circ$ ($\epsilon=0$, $a=1$, $b=-0.5$).} \label{conecigar_pic}

\end{figure}

\begin{figure}[p]

\centering
\includegraphics[height=\tam]{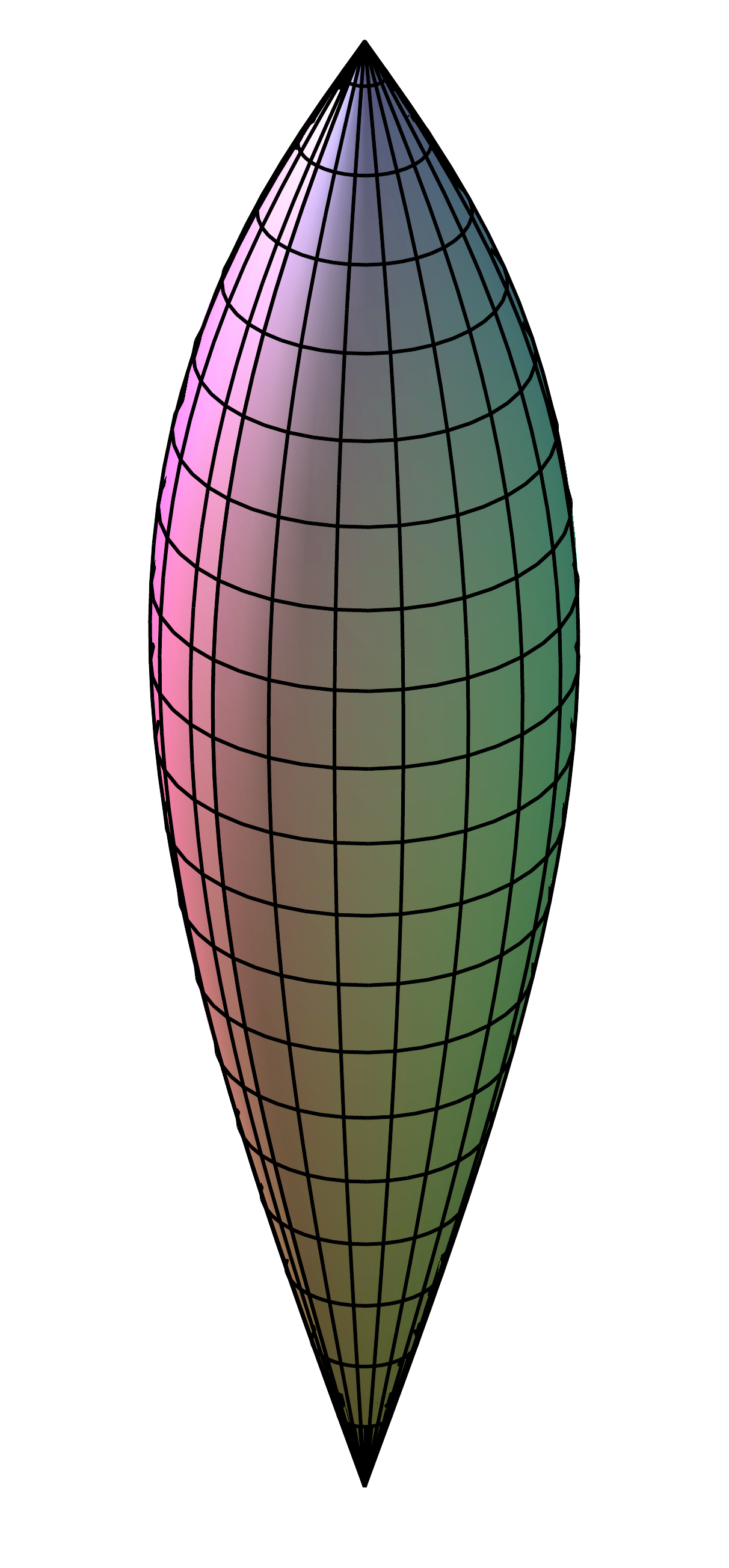}
\caption{ A football soliton with cone angles $108^\circ$ and $183.38^\circ$ ($\epsilon=-1$, $a=1$, $b=0.3$, $A=4.56$).}
\label{football_pic}
\end{figure}

\begin{figure}
\centering
\includegraphics[height=\tam]{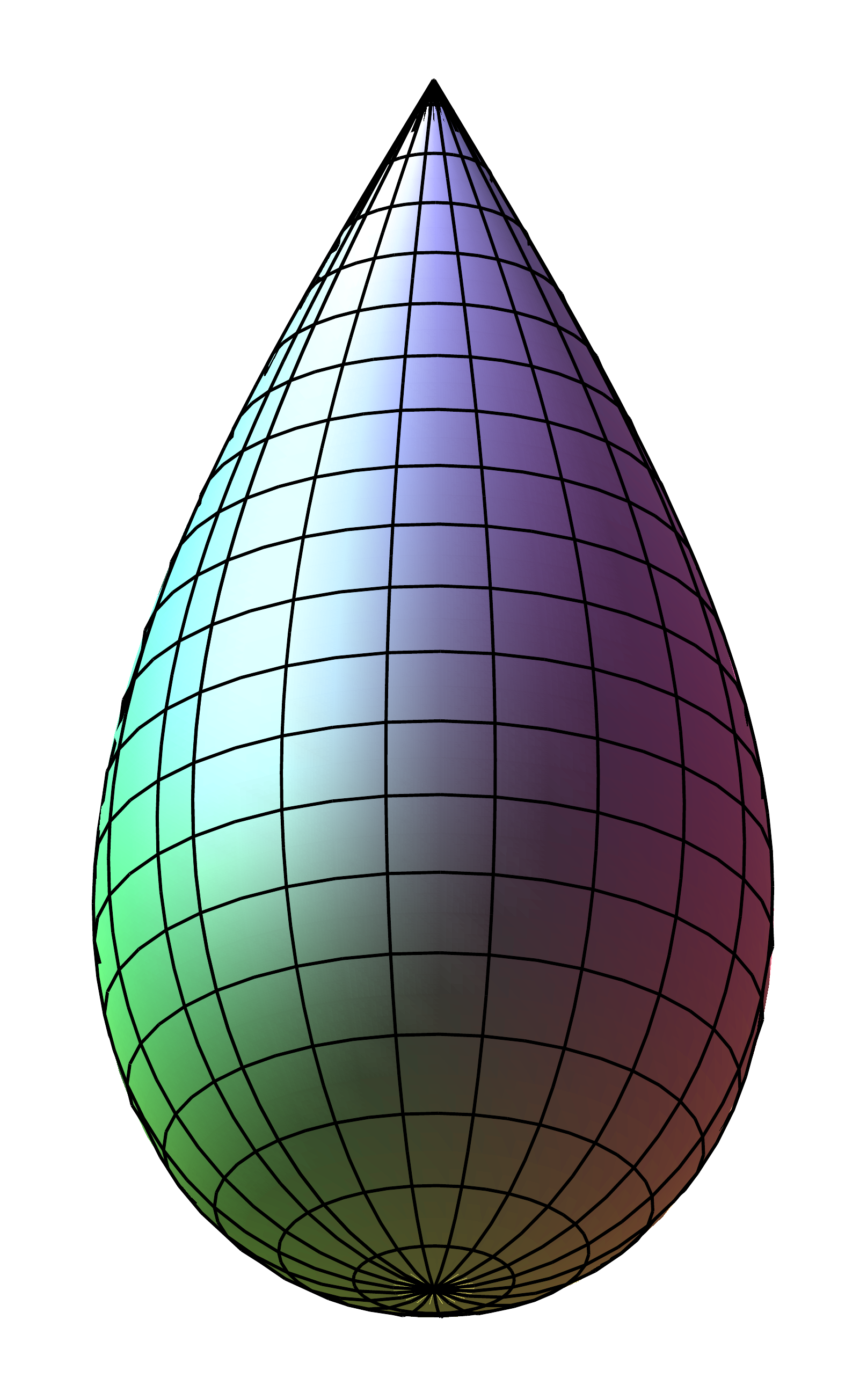}
\caption{ A teardrop soliton with cone angle $169.36^\circ$ ($\epsilon=-1$, $a=0.8$, $b=-1$, $A=4.68$).} \label{teardrop_pic}

\end{figure}

\begin{figure}[p]

\centering
\includegraphics[height=\tam]{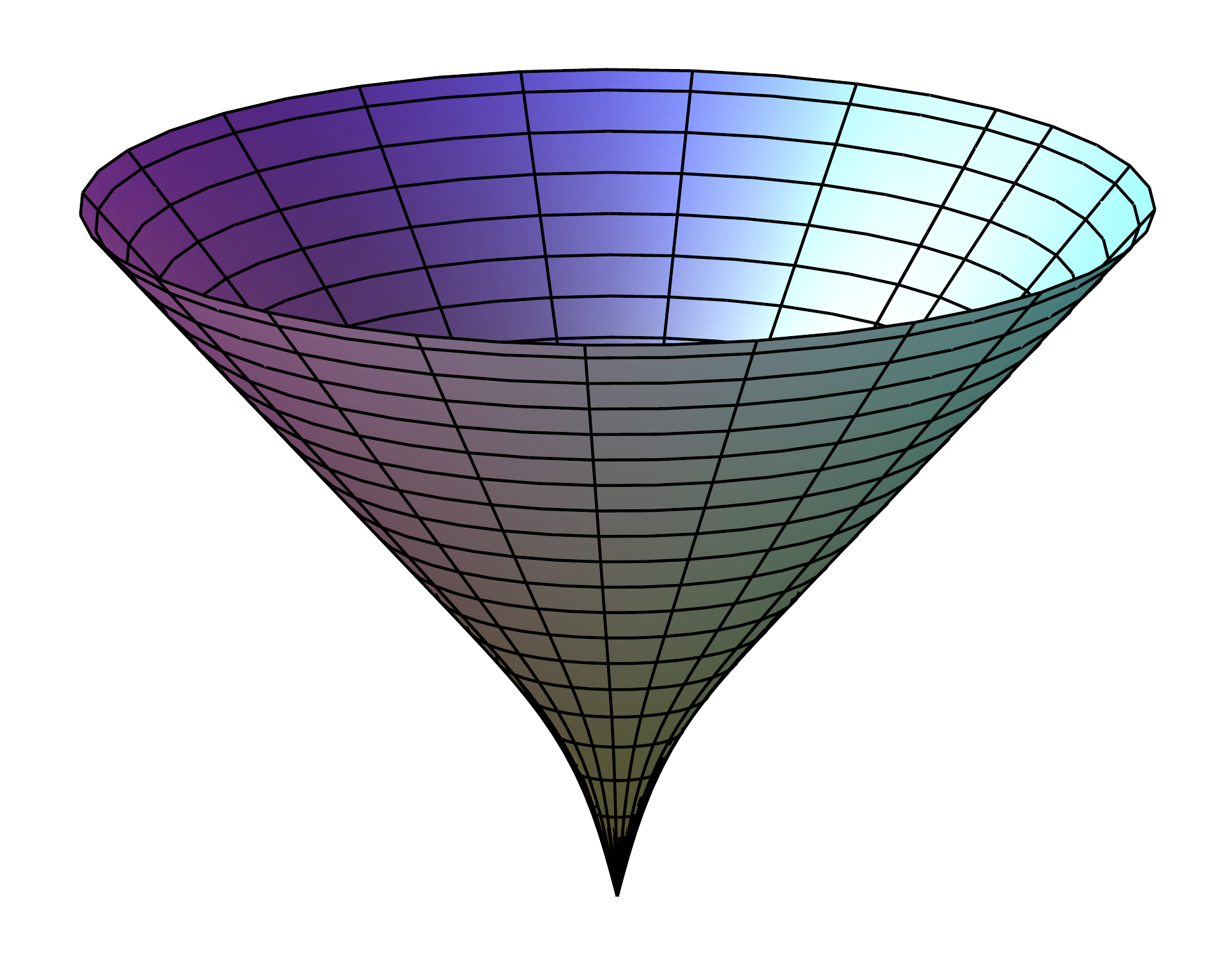}
\caption{ An $\alpha\beta$-cone soliton with cone angles $\alpha=240^\circ$ and $\beta=90^\circ$ ($\epsilon=1$, $a=0.75$, $b=-0.25$). Note
that the curvature is negative since $\alpha>\beta$.} \label{cone1_pic}
\end{figure}

\begin{figure}
\centering
\includegraphics[height=\tam]{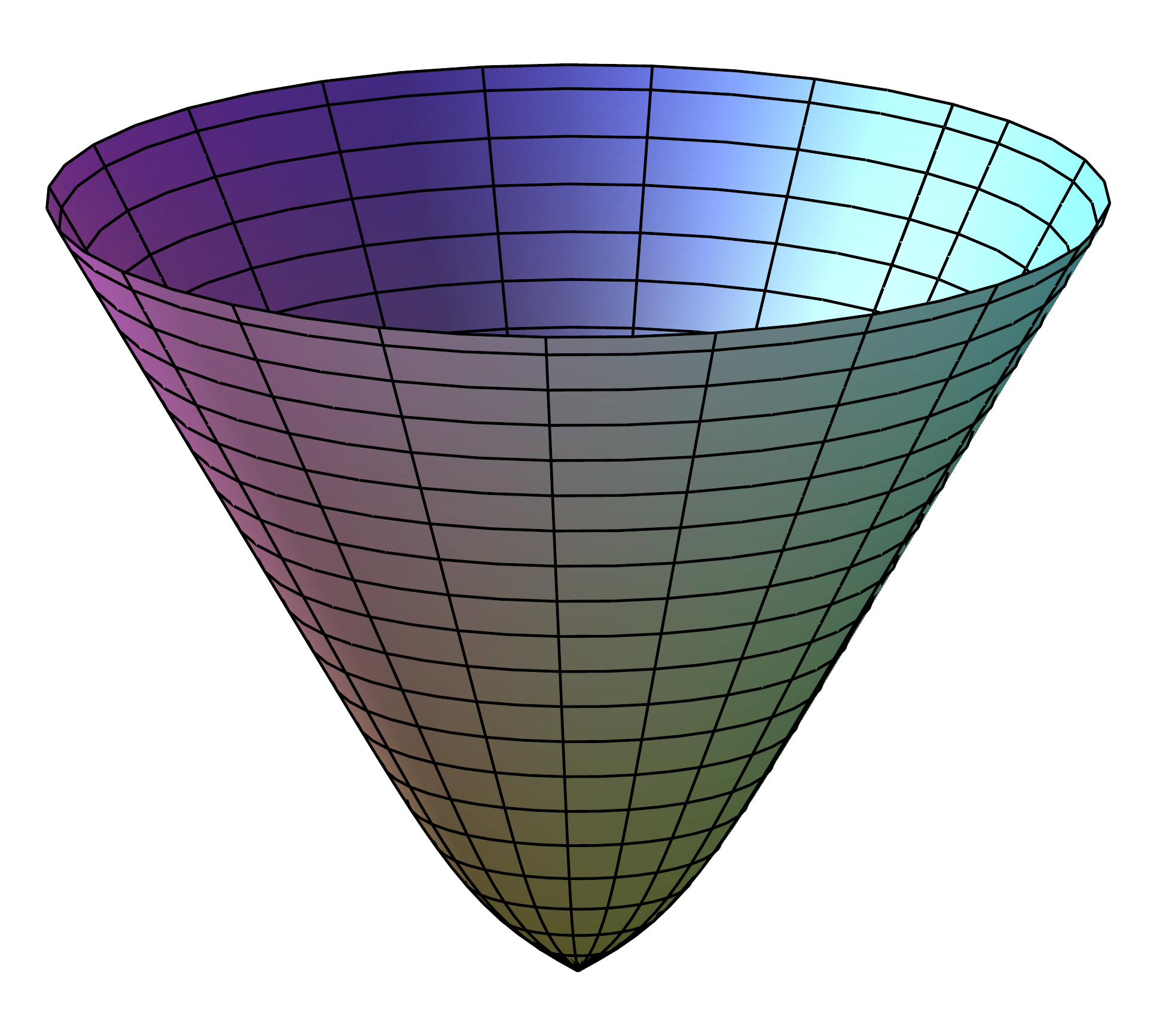}
\caption{ An $\alpha\beta$-cone soliton with cone angles $\alpha=180^\circ$ and $\beta=306^\circ$ ($\epsilon=1$, $a=1$, $b=-0.85$). Note
that the curvature is positive since $\alpha<\beta$.} \label{cone2_pic}

\end{figure}


\begin{figure}[p]

 \centering
\includegraphics[height=\tam]{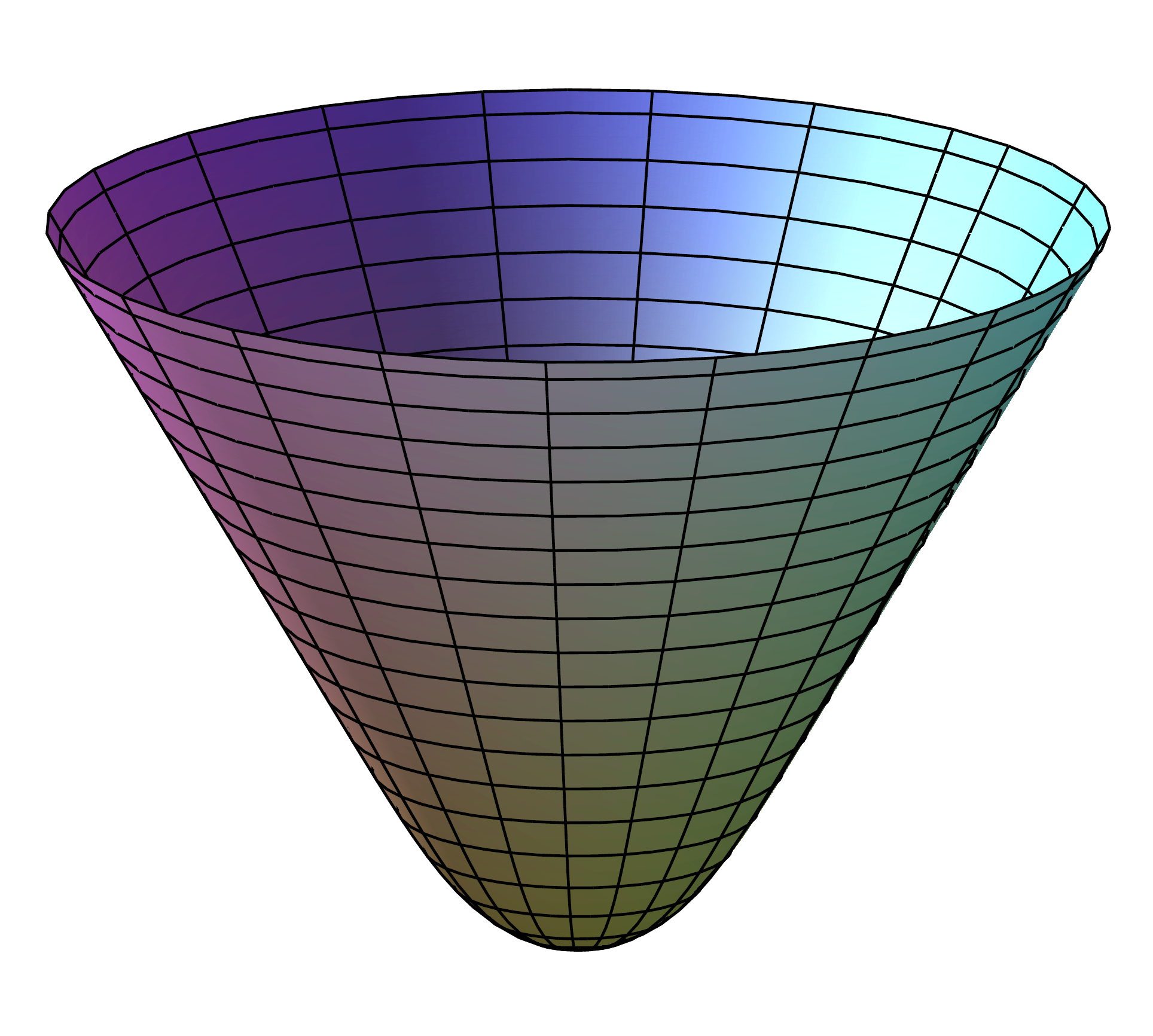}
\caption{ A blunt $\alpha$-cone soliton with asymptotic cone angle $\alpha=180^\circ$ ($\epsilon=1$, $a=1$, $b=-1$).} 
\label{cone3_pic}
\end{figure}

\begin{figure}
\centering
\includegraphics[height=\tam]{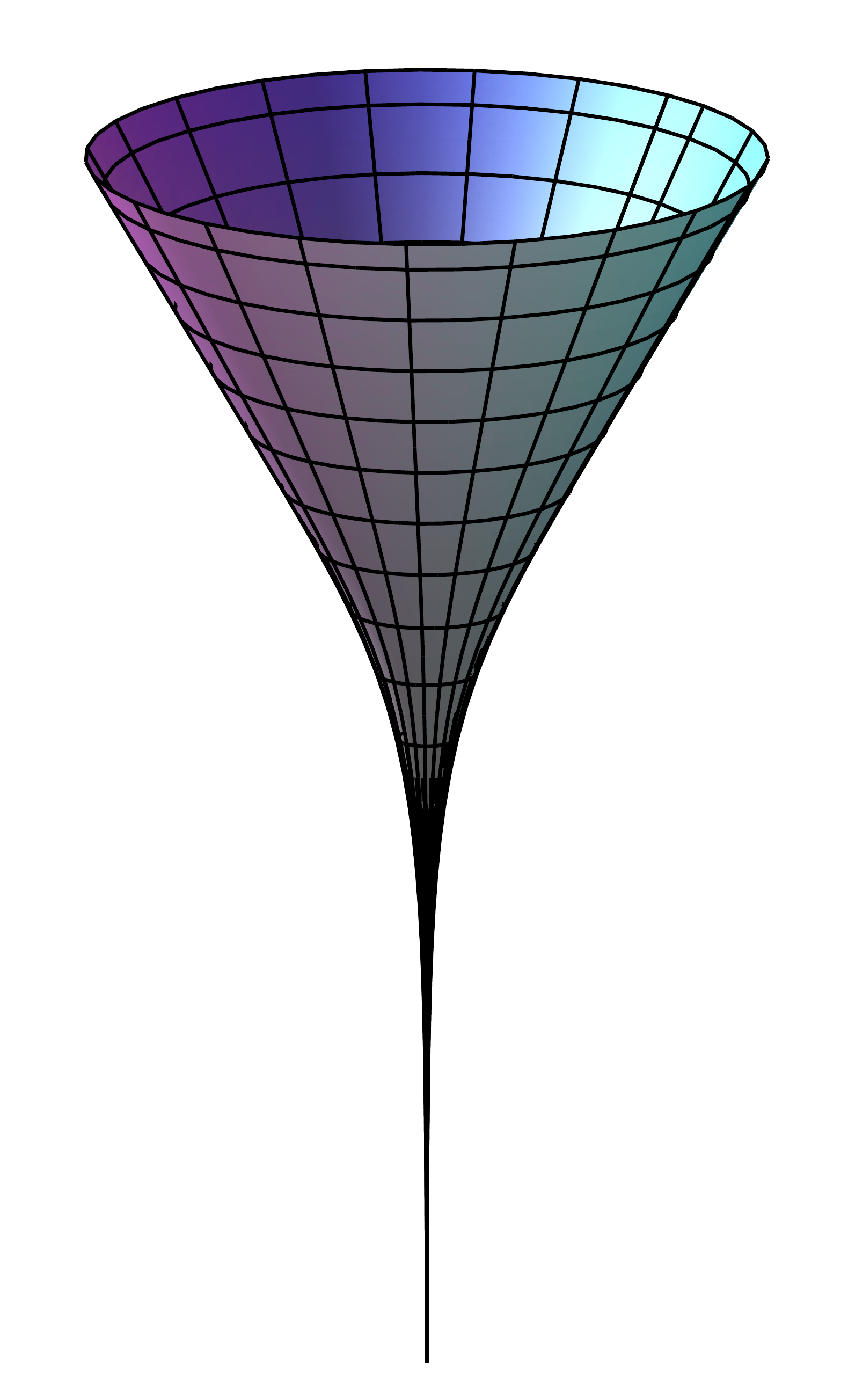}
\caption{ A cusped $\alpha$-cone soliton with asymptotic cone angle $\alpha=180^\circ$ ($\epsilon=1$, $a=1$, separatrix $S$ ($b\approx
0$)).} \label{cone4_pic}

\end{figure}

\clearpage

\bibliographystyle{amsplain}
\bibliography{biblio}

\providecommand{\bysame}{\leavevmode\hbox to3em{\hrulefill}\thinspace}
\providecommand{\MR}{\relax\ifhmode\unskip\space\fi MR }
\providecommand{\MRhref}[2]{%
  \href{http://www.ams.org/mathscinet-getitem?mr=#1}{#2}
}
\providecommand{\href}[2]{#2}
\begin{thebibliography}{1}

\bibitem{BernsteinMettler}
Jacob Bernstein and Thomas Mettler, \emph{Two-dimensional gradient {R}icci
  solitons revisited}, Preprint, arXiv:1303.6854 [math.DG] (2013).

\bibitem{3CY}
H.~D. Cao, B.~Chow, S.~C. Chu, and S.~T. Yau (eds.), \emph{Collected papers on
  {R}icci flow}, Series in Geometry and Topology, vol.~37, International Press,
  Somerville, MA, 2003. \MR{2145154 (2006e:53002)}

\bibitem{ChenLuTian}
Xiuxiong Chen, Peng Lu, and Gang Tian, \emph{A note on uniformization of
  {R}iemann surfaces by {R}icci flow}, Proc. Amer. Math. Soc. \textbf{134}
  (2006), no.~11, 3391--3393 (electronic). \MR{2231924 (2007d:53109)}

\bibitem{Chow_orbifolds}
Bennett Chow, \emph{On the entropy estimate for the {R}icci flow on compact
  {$2$}-orbifolds}, J. Differential Geom. \textbf{33} (1991), no.~2, 597--600.
  \MR{1094471 (92e:58228)}

\bibitem{Chow_sphere}
\bysame, \emph{The {R}icci flow on the {$2$}-sphere}, J. Differential Geom.
  \textbf{33} (1991), no.~2, 325--334. \MR{1094458 (92d:53036)}

\bibitem{TRFTA_1}
Bennett Chow, Sun-Chin Chu, David Glickenstein, Christine Guenther, James
  Isenberg, Tom Ivey, Dan Knopf, Peng Lu, Feng Luo, and Lei Ni, \emph{The
  {R}icci flow: techniques and applications. {P}art {I}}, Mathematical Surveys
  and Monographs, vol. 135, American Mathematical Society, Providence, RI,
  2007, Geometric aspects. \MR{2302600 (2008f:53088)}

\bibitem{ChowWu}
Bennett Chow and Lang-Fang Wu, \emph{The {R}icci flow on compact
  {$2$}-orbifolds with curvature negative somewhere}, Comm. Pure Appl. Math.
  \textbf{44} (1991), no.~3, 275--286. \MR{1090433 (92g:53035)}

\bibitem{Hamilton_surfaces}
Richard~S. Hamilton, \emph{The {R}icci flow on surfaces}, Mathematics and
  general relativity ({S}anta {C}ruz, {CA}, 1986), Contemp. Math., vol.~71,
  Amer. Math. Soc., Providence, RI, 1988, pp.~237--262. \MR{954419 (89i:53029)}

\bibitem{Wu}
Lang-Fang Wu, \emph{The {R}icci flow on {$2$}-orbifolds with positive
  curvature}, J. Differential Geom. \textbf{33} (1991), no.~2, 575--596.
  \MR{1094470 (92d:53037)}

\end{thebibliography}

\end{document}